\documentclass[reqno]{amsart}
\usepackage{graphicx} 
\usepackage{amsmath}%
\usepackage{amsfonts}%
\usepackage{amssymb}%
\usepackage{graphicx}
\usepackage{mathrsfs}
\usepackage{hyperref}
\usepackage{fullpage}
\usepackage{cite}
\usepackage{mathrsfs}
\usepackage{amsmath}
\usepackage{amssymb}
\usepackage{amsthm}
\usepackage{amsfonts}
\usepackage{amssymb}
\usepackage{amsmath}
\usepackage{enumitem}
\usepackage{mathtools}
\usepackage{mathrsfs}
\usepackage{color}
\usepackage{graphicx}

\allowdisplaybreaks
\def\0{\emptyset}

\def\0{\emptyset}

\newtheorem{theorem}{Theorem}
\newtheorem{lemma}[theorem]{Lemma}

\newtheorem{claim}[theorem]{Claim}

\newtheorem{conjecture}[theorem]{Conjecture}
\newtheorem{definition}[theorem]{Definition}

\newtheorem{problem}[theorem]{Problem}
\newtheorem{remark}[theorem]{Remark}

\usepackage{graphicx}
\usepackage{amsmath}
\usepackage{color}

\begin{document}
\begin{center}
{\LARGE Shadows of Uniform Hypergraphs under a Minimum Degree Condition}%
\end{center}

\vspace{0.3em}

\begin{center}
{\large Haorui Liu\,\textsuperscript{a}, \ Mei Lu\,\textsuperscript{a,\,1}, \ Yi Zhang\,\textsuperscript{b,c,\,$\ast$,\,2}}
\end{center}

\vspace{0.6em}

\begin{center}
\begin{minipage}{0.92\textwidth}
\small
\begin{tabular}{@{}r@{\ }p{0.95\linewidth}@{}}
\textsuperscript{a} & Department of Mathematical Sciences, Tsinghua University, Beijing, 100084, P.R.\ China \\[0.25em]
\textsuperscript{b} & School of Mathematical Sciences, Beijing University of Posts and Telecommunications, Beijing, 100876, P.R.\ China \\[0.25em]
\textsuperscript{c} & Key Laboratory of Mathematics and Information Networks (BUPT), Ministry of Education, Beijing, 100876, P.R.\ China
\end{tabular}
\end{minipage}
\end{center}

\renewcommand{\thefootnote}{\fnsymbol{footnote}}
\setcounter{footnote}{1}
\footnotetext[1]{Corresponding author: Yi Zhang.\\
\emph{E-mail addresses:} \texttt{lhr22@mails.tsinghua.edu.cn} (H.\ Liu), \texttt{lumei@tsinghua.edu.cn} (M.\ Lu), \texttt{shouwangmm@sina.com} (Y.\ Zhang).}
\renewcommand{\thefootnote}{\arabic{footnote}}
\setcounter{footnote}{0}
\footnotetext[1]{Mei Lu was supported by National Natural Science Foundation of China under Grant No.~12571372 and Beijing Natural Science Foundation 1262010.}
\footnotetext[2]{Yi Zhang is supported by Fundamental Research Funds for the Central Universities and Innovation Foundation of BUPT for Youth under Grant No.~2023RC49 and National Natural Science Foundation of China under Grant No.~11901048 and 12071002.}

\vspace{1.2em}

\begin{center}
\begin{minipage}{0.92\textwidth}
\small
\noindent\textbf{Abstract.} Given a set $X$ and an integer $t$, let $\mathcal{F}$ be a family of $k$-subsets of $X$. The Kruskal--Katona theorem implies that if $|\mathcal{F}|\geq \binom{t}{k}$, then $|\partial_\ell\mathcal{F}|\geq\binom{t}{\ell}$. The minimum degree version of this problem asks: if $\delta(\mathcal{F})\geq \binom{t}{k-1}$, how small can $|\partial_\ell\mathcal{F}|$ be? We call a hypergraph \textit{extremal} if it achieves the minimum value of $|\partial_\ell \mathcal{F}|$ subject to the degree condition $\delta(\mathcal{F}) \geq \binom{t}{k-1}$. F\"uredi and Zhao [SIAM J. Discrete Math. 36(4), 2022] proved that for $k=3$, $\ell=2$ and $t\ge 2$, every extremal hypergraph contains an isolated copy of $K_{t+1}^3$ when $|X| > \frac{1}{4}(t+1)^2(t+2)$. In this article, we study the general case $k > \ell \geq 2$. By developing a hypergraph transformation that combines shifting operations with antilexicographic compression, we prove that, for every integer \(t\ge k-1\),  there exists an extremal hypergraph containing an isolated copy of $K^{k}_{t+1}$ whenever $|X| > \frac{1}{4}(t+1)^2\binom{t-1}{\ell-2} + 3t+1$. In the case when \(k=3\) and \(\ell=2\), this gives the threshold
\(\frac14(t+1)^2+3t+1\), which is smaller than
\(\frac14(t+1)^2(t+2)\) for every \(t\ge3\); for \(t=2\), the two thresholds
give the same integer condition on \(|X|\).
\end{minipage}
\end{center}

\vspace{0.8em}

\begin{center}
\begin{minipage}{0.92\textwidth}
\noindent\textbf{Key words.} Shadow, Hypergraph, Kruskal--Katona theorem, Minimum degree, Shifting

\medskip
\noindent\textbf{MSC codes.} 05D05, 05C65, 05C35
\end{minipage}
\end{center}

\medskip

\section{Introduction}

Let $X$ be a finite set and let $\binom{X}{k}$ denote the collection of all $k$-subsets of $X$. 
For a family $\mathcal{F} \subseteq \binom{X}{k}$, the $\ell$-shadow of $\mathcal{F}$, denoted by $\partial_\ell \mathcal{F}$, 
is defined as the family of all $\ell$-subsets of $X$ that are contained in at least one member of $\mathcal{F}$.

\begin{definition}
For any labeled set $X = [n]$ and any integer $k \le n$, the \textit{antilexicographic order} ``$<$" on $\binom{X}{k}$ is defined by setting, for $A, B \in \binom{X}{k}$,  $A < B$ if $\max\{i : i \in A \setminus B\} < \max\{i : i \in B \setminus A\}$; we write $A \le B$ if $A < B$ or $A = B$. For any real number $x \ge k$, we define the generalized binomial coefficient as $\binom{x}{k} = \frac{x(x-1)\cdots(x-k+1)}{k!}$.
\end{definition}

\begin{theorem}[Kruskal-Katona] The following two versions of the theorem will be employed in our analysis:

\begin{enumerate}
    \item[(i)] \textbf{(Original Version \cite{Kr,Ka})} Suppose $\mathcal{F} \subseteq \binom{X}{k}$ and $|\mathcal{F}| = m$. Then $|\partial_\ell \mathcal{F}| \ge |\partial_\ell \mathcal{F}_0|$, where $\mathcal{F}_0$ is the collection of the $m$ smallest elements in $\binom{X}{k}$ with respect to the antilexicographic order.
    
    \item[(ii)] \textbf{(Lovász Version \cite{Lovász})} If $\mathcal{F}$ is a $k$-uniform family with $|\mathcal{F}| \ge \binom{x}{k}$ for some real $x \ge k$, then for any $\ell \le k$, the size of its $\ell$-shadow satisfies 
    \[ |\partial_\ell \mathcal{F}| \ge \binom{x}{\ell}, \]
    and equality occurs if and only if $x$ is an integer and $\mathcal{F}$ is the family of all $k$-subsets of some $x$-element set.
\end{enumerate}
\end{theorem}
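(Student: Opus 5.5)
Both parts are classical, from \cite{Kr,Ka} and \cite{Lovász}, and the paper will only use them as tools. If I had to prove them, I would use shifting (compression) throughout.

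\emph{Reduction to shifted families.} For $i<j$ define the shift $S_{ij}$. It replaces $F\in\mathcal F$ by $(F\setminus\{j\})\cup\{i\}$ whenever $j\in F$, $i\notin F$, and the new set is not already in $\mathcal F$. The first step is the standard lemma
\[
|S_{ij}(\mathcal F)|=|\mathcal F| \quad\text{and}\quad \partial_{k-1}S_{ij}(\mathcal F)\subseteq S_{ij}(\partial_{k-1}\mathcal F).
\]
It is proved by checking, for a $(k-1)$-set $G$ in the left-hand side, the few cases according to whether $i,j\in G$. Each shift that changes $\mathcal F$ strictly decreases $\sum_{F\in\mathcal F}\sum_{x\in F}x$. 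So after finitely many shifts we may assume $\mathcal F$ is \emph{shifted}: it is invariant under every $S_{ij}$ and has the same size and no larger $(k-1)$-shadow. Both statements are about $\ell$-shadows. Since $\partial_\ell\mathcal F=\partial_\ell(\partial_{\ell+1}\cdots\partial_{k-1}\mathcal F)$, it suffices to treat $\ell=k-1$ and iterate, using monotonicity of $\binom{x}{\ell}$ in $x$.

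\emph{Part (ii).} I would induct on $k$ and, within that, on $|\mathcal F|$. Write $\mathcal F(1)=\{F\setminus\{1\}:1\in F\in\mathcal F\}$ and $\mathcal F(\bar 1)=\{F\in\mathcal F:1\notin F\}$. Shiftedness gives the key inclusion $\partial_{k-1}\mathcal F(\bar 1)\subseteq\mathcal F(1)$: if $G\subset F\in\mathcal F(\bar 1)$ with $|G|=k-1$, then $G\cup\{1\}$ is obtained from $F$ by a shift, so it lies in $\mathcal F$.

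Now suppose $|\mathcal F(1)|<\binom{x-1}{k-1}$. Then $|\mathcal F(\bar 1)|>\binom{x}{k}-\binom{x-1}{k-1}=\binom{x-1}{k}$. By induction on the size of the ground set or family, $|\partial_{k-1}\mathcal F(\bar 1)|\ge\binom{x-1}{k-1}$, which together with the inclusion contradicts the assumption. Hence $|\mathcal F(1)|\ge\binom{x-1}{k-1}$.

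The shadow contains the disjoint union of $\mathcal F(1)$, consisting of sets missing $1$, and $\{\{1\}\cup H: H\in\partial_{k-2}\mathcal F(1)\}$. By induction on $k$, the latter has size at least $\binom{x-1}{k-2}$. Summing gives $|\partial_{k-1}\mathcal F|\ge\binom{x-1}{k-1}+\binom{x-1}{k-2}=\binom{x}{k-1}$.

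For the equality case I would trace equality through every inequality: it forces $|\mathcal F(1)|=\binom{x-1}{k-1}$ and equality in the inductive call. This yields that $\mathcal F(1)$ is complete on an $(x-1)$-set and $x$ is an integer. Finally I would check that equality is preserved backwards through the shifts, i.e.\ a family whose shift is a clique with equal shadow must itself be a clique. This backward step is the main obstacle, and I would handle it by a separate argument on a single shift $S_{ij}$.

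\emph{Part (i).} For (i) I would run the same decomposition with exact integers rather than the real parameter $x$. Write $m$ in its $k$-cascade form $m=\binom{a_k}{k}+\binom{a_{k-1}}{k-1}+\cdots$. Then compare $\mathcal F(\bar 1),\mathcal F(1)$ with the initial antilexicographic segments, inducting on $n$ and $k$. The delicate point is the numerical inequality on cascade representations needed when $|\mathcal F(1)|$ falls below its value in $\mathcal F_0$. I expect that bookkeeping, rather than the combinatorics, to be the hardest part.
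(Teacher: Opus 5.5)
The paper gives no proof of this theorem. Both parts are quoted from Kruskal, Katona and Lov\'asz and used as black boxes; the equality case of (ii) is what gives a vertex of degree exactly $\binom{t}{\ell-1}$ a unique $K^\ell_{t+1}$. Your outline is Frankl's shifting proof. It is a correct, self-contained route, and it fits the paper well, since the paper later uses the same operator $S_{i,j}$ and antilexicographic compression.

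The inequality in (ii) works as written except in the range $k\le x<k+1$. There $x-1<k$, so your inductive hypothesis does not literally apply. In that range $|\mathcal F(\bar 1)|>\binom{x-1}{k}\ge 0$ gives $\mathcal F(\bar 1)\neq\emptyset$, hence $|\partial_{k-1}\mathcal F(\bar 1)|\ge k>\binom{x-1}{k-1}$ directly.

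Several points need tightening. None of them is fatal.

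(a) The equality clause is false for $\ell=k$. For $k=\ell=2$, the family $\mathcal F=\{12,23,34\}$ has $|\partial_2\mathcal F|=3=\binom32$ but is not complete. Your reduction proves the clause for $1\le\ell\le k-1$, which is all the paper uses. To push equality down from level $k-1$, note that $|\partial_{k-1}\mathcal F|=\binom{y}{k-1}$ with $y>x$ would give $|\partial_\ell\mathcal F|\ge\binom{y}{\ell}>\binom{x}{\ell}$.

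(b) In the shifted equality case, you stop at $\mathcal F(1)=\binom{Y'}{k-1}$. The key inclusion then puts every member of $\mathcal F(\bar 1)$ inside $Y'$. So $\mathcal F\subseteq\binom{Y'\cup\{1\}}{k}$, and $|\mathcal F|\ge\binom{x}{k}$ forces equality.

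(c) The backward step you flag does hold. Let $S_{ij}(\mathcal F)=\binom{Y}{k}$ with $|Y|=x$. Unless $i\in Y$ and $j\notin Y$, one checks directly that $\mathcal F=\binom{Y}{k}$; the case $i\notin Y\ni j$ cannot occur. In the remaining case, $\mathcal F\supseteq\binom{Y\setminus\{i\}}{k}$. Moreover, for each $B\in\binom{Y\setminus\{i\}}{k-1}$, exactly one of $B\cup\{i\}$ and $B\cup\{j\}$ lies in $\mathcal F$; call the two resulting classes $\mathcal B_i$ and $\mathcal B_j$. Since $\partial_{k-2}\mathcal B_i\cup\partial_{k-2}\mathcal B_j=\binom{Y\setminus\{i\}}{k-2}$,
\[
|\partial_{k-1}\mathcal F|\ge\binom{x-1}{k-1}+|\partial_{k-2}\mathcal B_i|+|\partial_{k-2}\mathcal B_j|\ge\binom{x-1}{k-1}+\binom{x-1}{k-2}=\binom{x}{k-1}.
\]
Equality requires $\partial_{k-2}\mathcal B_i\cap\partial_{k-2}\mathcal B_j=\emptyset$. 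Connectivity of the Johnson graph on $\binom{Y\setminus\{i\}}{k-1}$ then forces one class to be empty. Hence $\mathcal F$ is complete on $Y$ or on $(Y\setminus\{i\})\cup\{j\}$.

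(d) For (i), iterating from level $k-1$ down to $\ell$ needs the fact that the shadow of an initial antilexicographic segment is again an initial segment.

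Your "delicate" step can also be phrased without cascade arithmetic. Both $\mathcal F_0(1)$ and $\mathcal F_0(\bar 1)$ are initial segments on $X\setminus\{1\}$. Also
\[
\partial_{k-1}\mathcal F_0=\mathcal F_0(1)\sqcup\bigl\{\{1\}\cup H:H\in\partial_{k-2}\mathcal F_0(1)\bigr\}.
\]
What the case $|\mathcal F(1)|<|\mathcal F_0(1)|$ needs is the following. Let $N$ be the first $k$-subset of $X\setminus\{1\}$ outside $\mathcal F_0$. Then $\partial_{k-1}(\mathcal F_0(\bar 1)\cup\{N\})\supseteq\mathcal F_0(1)$.

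The only subtlety is the tail of the cascade with $a_i=i$ for $s\le i\le r$. Those blocks contribute to $\mathcal F_0(1)$ but not to $\mathcal F_0(\bar 1)$. The set $N=\{a_k+1,\dots,a_{r+1}+1\}\cup\{2,\dots,r+1\}$ covers exactly those missing sets.
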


A family $\mathcal{F}$ of $k$-subsets of $X$ can also be regarded as a $k$-uniform hypergraph with vertex set $X$. A $k$-uniform hypergraph (briefly, $k$-graph) $H$ is a pair $(V, E)$, where $V = V (H)$ is a finite set of vertices and $E = E(H)$ is a family of $k$-element subsets of $V$. Given a set $S \subseteq V$, the degree $\deg_H(S)$ of $S$ is the number of edges of $H$ that contain $S$. We write $\deg(S)$ when $H$ is clear from the context. If $S = \{u\}$, then we write $\deg(u)$ instead of $\deg(\{u\})$. Let $\delta(H) = \min\{ \deg(u) : u \in V (H)\}$.  For any $Y, S \subseteq V$ with $0 \le |Y| < k$, the \textit{link} of $Y$ restricted to $S$, denoted by $L_H^Y(S)$, is defined as the $(k-|Y|)$-graph on the vertex set $S$ with edge set 
$E(L_H^Y(S)) = \{ A \subseteq S : A \cup Y \in E(H) \}.$ 
For brevity, we denote $L_H^{\{u\}}(V(H))$ simply by $L_H(u)$,   and when there is no confusion, we use $L_H^Y(S)$ to denote its edge set $E(L_H^Y(S))$. The \textit{neighborhood} of a vertex $u$ is defined as 
$N_H(u) = \{v \in V(H) : v \neq u \text{ and } \exists\, e \in E(H) \text{ with } \{u,v\} \subseteq e\}$ and for any $v\in V(H)$, we say $v$ is a \textit{neighbor} 
of $u$ if $v\in N_H(u)$.  More generally, 
for a set $S \subseteq V(H)$, 
the \emph{neighborhood} of $S$ in $H$ is defined as
$N_H(S) = \bigl\{ v \in V(H) \setminus S \; :\; 
    \exists\, e \in E(H) \text{ such that } v \in e 
    \text{ and } e \cap S \neq \emptyset \bigr\}.$
 The \textit{induced subgraph} of $H$ on $S$, 
denoted by $H[S]$, is the $k$-graph with vertex set $S$ and edge set 
$E(H[S]) = \{e \in E(H) : e \subseteq S\}$.
A \textit{clique} of $H$ is a set $S \subseteq V(H)$ such that every 
$k$-element subset of $S$ is an edge of $H$, i.e., $\binom{S}{k} \subseteq E(H)$.

Recently, 
 extremal problems under a minimum or maximum degree condition have received considerable attention. Frankl\cite{Fra2} studied the Erd{\H{o}}s--Ko--Rado theorem under the condition of a maximum degree. Frankl and Tokushige\cite{Fra4}, Huang and Zhao\cite{Huang1}, Kupavskii\cite{Kup} and Huang and Zhang \cite{Huang2} studied the minimum degree version of the Erd{\H{o}}s--Ko--Rado theorem. Frankl, Han, Huang and Zhao\cite{Fra3} studied the minimum degree version of the Hilton-Milner theorem. In addition, there has been extensive research on the degree version of the Erd{\H{o}}s matching conjecture\cite{Han, Kha1, Mar, Kuhn1, Pik, Rod2, Rod3, TrZh12, TrZh13,wang, Yi4, zhang, zhang2} and Tur\'{a}n problems\cite{Lo, Lo1, Si, Mu}. Chase \cite{Ch} studied the Kruskal-Katona theorem under a condition of maximum degree.

 The minimum degree version of the Kruskal--Katona theorem was initiated by F\"uredi and Zhao \cite{Fu2}, who posed the following problem.

\begin{problem}\cite{Fu2}\label{prob:FZ-shadow}
Given integers $k\geq 3$ and \(t\ge k-1\),  let $X$ be a set of $n$ vertices and $\mathcal{F}$ be a family of $k$-subsets of $X$ with $\delta(\mathcal{F}) \geq \binom{t}{k-1}$. Determine the minimum possible size of the $(k-1)$-shadow $|\partial_{k-1} \mathcal{F}|$.
\end{problem}

F\"uredi and Zhao \cite{Fu2} provided the following equivalent formulation of Problem~\ref{prob:FZ-shadow}.

\begin{problem}\cite{Fu2}\label{prob:FZ-hyp}
Given integers \(k\ge3\) and \(t\ge k-1\), determine the minimum number of edges in a $(k-1)$-graph $G$ on $n$ vertices such that every vertex is contained in at least $\binom{t}{k-1}$ copies of $K^{k-1}_k$ (the complete $(k-1)$-graph on  $k$ vertices).
\end{problem}

In the case $k = 3$, Problem~\ref{prob:FZ-hyp} asks for the minimum number of edges in a graph $G$ on $n$ vertices such that every vertex is contained in at least $\binom{t}{2}$ triangles. This is closely related to the classical Rademacher--Tur\'{a}n problem, initiated by Rademacher (unpublished) and Erd\H{o}s \cite{Erdos}, which seeks the minimum number of triangles in a graph of given order and size. A natural variant is to consider the minimum number of triangles containing a fixed vertex.
F\"uredi and Zhao \cite{Fu2} studied this problem and determined the extremal graphs for $t+1 \leq n \leq 2(t+1)$. As the precise statement is rather elaborate, we refer the interested reader to \cite{Fu2} for details. For large $n$, they further proved the following structural result.

\begin{theorem}\cite{Fu2}\label{pro}
    When $n>\frac{1}{4}(t+1)^2(t+2)$ and $t \ge 2$ is an integer, every extremal graph for Problem \ref{prob:FZ-hyp} with $k=3$  contains an isolated copy of $K_{t+1}$.
\end{theorem}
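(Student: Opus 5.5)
This is Füredi and Zhao's Theorem~\ref{pro}, which the paper quotes. I will prove it directly for $k=3$, in graph language. Let $G$ be an extremal graph on $n$ vertices in which every vertex lies in at least $\binom{t}{2}$ triangles, and write $e(G)$ for its number of edges.

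\textbf{Upper bound on $e(G)$.} Write $n=q(t+1)+r$ with $0\le r\le t$. Take $q-1$ disjoint copies of $K_{t+1}$ together with one graph on the remaining $t+1+r$ vertices that contains a $K_{t+1}$ and attaches each of the other $r$ vertices cheaply. A vertex $v$ joined to a set $S$ inside a clique gets $\binom{|S|}{2}$ triangles, so $|S|=t$ suffices. This gives
\[
e(G)\le \frac{n}{t+1}\binom{t+1}{2}+O(t^2)=\frac{nt}{2}+O(t^2).
\]

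\textbf{Local lower bound.} For a vertex $v$, the number of triangles through $v$ equals $e(G[N(v)])$. This forces $e(G[N(v)])\ge\binom{t}{2}$, hence $\deg(v)\ge t$, with equality only if $N(v)$ spans a $K_t$. Now distribute edge weight to vertices. Each edge $uv$ gives weight $1/2$ to each endpoint, so every vertex receives $\deg(v)/2\ge t/2$. A vertex of degree exactly $t$ lies in a $K_{t+1}$, namely $\{v\}\cup N(v)$. Call a vertex \emph{tight} if $\deg(v)=t$, and \emph{loose} otherwise; a loose vertex has excess $\deg(v)/2-t/2\ge 1/2$.

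Comparing with the upper bound gives
\[
\sum_v\Bigl(\frac{\deg(v)}{2}-\frac t2\Bigr)=O(t^2),
\]
so at most $O(t^2)$ vertices are loose. More carefully, the excess is at most about $t(t+1)/4$ after optimizing the remainder construction. This is where the threshold $\tfrac14(t+1)^2(t+2)$ should enter.

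\textbf{Finding an isolated clique.} Each tight vertex $v$ spans the clique $C_v=\{v\}\cup N(v)$ of size $t+1$. A loose vertex can be adjacent to tight vertices in only a bounded number of cliques. Indeed, for a loose $w$ of degree $d$, the cliques $C_v$ containing $w$ through tight $v$ are essentially disjoint apart from $w$, because two tight vertices that are adjacent share a clique. There are at least $n-O(t^2)$ tight vertices, grouped into cliques of size $t+1$, so there are more than about $\tfrac{1}{4}(t+1)(t+2)$ distinct cliques. Each clique touched by a loose vertex uses up loose excess. With $n>\tfrac14(t+1)^2(t+2)$, pigeonhole then gives a clique whose vertices all have degree exactly $t$, and such a clique is an isolated $K_{t+1}$. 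Since $G$ is extremal rather than just some feasible graph, I would also argue by exchange: if no isolated $K_{t+1}$ existed, restructuring the loose part would reduce $e(G)$, a contradiction.

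\textbf{Main obstacle.} The hardest step is the accounting that turns total excess into a bound on the number of cliques touched by loose vertices. A clique can have all of its members tight except for cross edges, and I must show each such touch costs a definite amount of excess, roughly $1/2$ per incident loose edge. The constants must also be tuned to reach exactly $\tfrac14(t+1)^2(t+2)$. I would first try charging each edge between a clique and the outside to its outside endpoint.
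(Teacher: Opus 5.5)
The paper does not prove this statement. It quotes it from F\"uredi and Zhao, and the only in-paper ingredients that bear on it are Lemma~\ref{d} with $\ell=2$ and the observation, used in the proof of Theorem~\ref{1}, that a vertex of degree exactly $t$ lies in a unique $K_{t+1}$. You have the right ingredients, but two steps do not work as written. First, your comparison graph attaches each of the $r$ leftover vertices to $t$ vertices of a clique. That only gives $\sum_v(\deg v-t)=2e(G)-nt\le rt$, which can be of order $t^2$, and with that bound the counting below would give only $n>t^2(t+2)$. You need the comparison graph of Lemma~\ref{d}: $q-1$ disjoint copies of $K_{t+1}$ together with two copies of $K_{t+1}$ sharing $t+1-r$ vertices. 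Its $t+1-r$ shared vertices have degree $t+r$, so
\[
X:=\sum_v(\deg v-t)\le r(t+1-r)\le \tfrac14(t+1)^2 .
\]

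Second, and more seriously, the final pigeonhole is not yet an argument, and the reason you give for it is false. The cliques $C_v$ through a loose vertex $w$ need not be disjoint apart from $w$; in the two overlapping copies above they share $t+1-r$ loose vertices. What is true is only that distinct cliques have disjoint sets of tight vertices, so $w$ lies in at most $x_w+1$ of them, where $x_w=\deg w-t$. A clique-level count with this bound gives a threshold of roughly $\frac14(t+1)^2(2t+3)$, not the stated one.

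The missing idea is to count vertices instead of cliques. Every loose $w$ has $x_w\ge 1$, hence $1+\deg w=t+1+x_w\le (t+2)x_w$. So the number of vertices that are loose or adjacent to a loose vertex is at most
\[
\sum_{w\ \mathrm{loose}}(1+\deg w)\le (t+2)X\le \tfrac14(t+1)^2(t+2)<n .
\]
Thus some tight vertex $v$ has only tight neighbours. Then $\{v\}\cup N(v)$ is a $K_{t+1}$ all of whose vertices have degree exactly $t$, i.e.\ a component of $G$. This holds for every extremal $G$. Extremality enters only through the bound on $X$, so the exchange argument you sketch is unnecessary.
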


Theorem \ref{pro} implies that in every extremal graph for Problem~\ref{prob:FZ-hyp} with $k=3$, all but at most $\frac14 (t+1)^2 (t+2)$ vertices are contained  in vertex-disjoint copies of $K_{t+1}$. On the other hand, F\"{u}redi and Zhao \cite{Fu2}  determine the extremal graphs for $n \leq 2(t+1)$. There remains a gap between $2(t+1)$ and $\frac{1}{4}(t+1)^2(t+2)$. Thus, closing the gap would completely solve Problem~\ref{prob:FZ-hyp} for $k=3$.  Liu, Lu and Zhang \cite{Liu} narrowed this gap by improving the threshold on $n$ in Theorem~\ref{pro} from $O(t^3)$ to $O(t^2)$ for the stronger conclusion that every extremal graph contains an isolated copy of $K_{t+1}$. In the present paper, we pursue a related but different direction. In the special case $k=3$ and $\ell=2$, our result gives an existence version with a threshold of the same order $O(t^2)$; on the other hand, our method applies to the general setting $k>\ell\ge2$. Specifically, we generalize Problem~\ref{prob:FZ-shadow} to arbitrary $\ell$-shadows and consider the following problem.

\begin{problem}\label{dk}
    Given integers $k> \ell \geq 2$ and \(t\ge k-1\), let $X$ be a set of $n$ vertices and $\mathcal{F}$ be a family of $k$-subsets of $X$ with $\delta(\mathcal{F)}\geq\binom{t}{k-1}$. Determine the minimum possible size of the $\ell$-shadow $|\partial_{\ell}\mathcal{F}|$.
\end{problem}

Equivalently, we may formulate this in terms of hypergraphs.

\begin{problem}\label{13}
    Given integers \(k>\ell\ge2\) and \(t\ge k-1\), determine the minimum number of edges in an $\ell$-graph $G$ on $n$ vertices such that every vertex is contained in at least $\binom{t}{k-1}$ copies of $K_k^{\ell}$.
\end{problem}

 Note that Problems~\ref{prob:FZ-shadow} and~\ref{prob:FZ-hyp} are the special cases $\ell = k-1$ of Problems~\ref{dk} and~\ref{13}, respectively. 
In what follows, we work with Problem \ref{13} as our primary formulation.
Since every vertex $v \in V(G)$ is contained in at least $\binom{t}{k-1}$ copies of $K_k^\ell$, the link graph $L_G(v)$ of each vertex $v$, which is an $(\ell-1)$-graph,  must contain at least $\binom{t}{k-1}$ copies of $K_{k-1}^{\ell-1}$. Applying the Lov\'{a}sz version of the Kruskal-Katona theorem (Theorem 2 (ii)) to the link graph yields $\text{deg}(v) \ge \binom{t}{\ell-1}$, which  immediately gives the lower bound:
\begin{equation}\label{eqation111}
|E(G)| \ge \frac{n}{\ell} \binom{t}{\ell-1}.
\end{equation}
Note that the lower bound in (1) is sharp whenever $t$ is an integer and $(t+1)$ divides $n$, as the equality is attained by a disjoint union of copies of $K_{t+1}^\ell$. However, when $t+1$ does not divide $n$, the vertices cannot be evenly partitioned into copies of $K_{t+1}^{\ell}$, so the lower bound in (\ref{eqation111}) is no longer attainable, and determining the exact minimum becomes significantly more challenging.

The following lemma establishes an upper bound on the total ``excess'' degree sum in any extremal $\ell$-graph, which constrains how far the extremal structure can deviate from a disjoint union of copies of $K^\ell_{t+1}$. In particular, it implies that all but at most $O(t^\ell)$ vertices have degree exactly $\binom{t}{\ell-1}$ and thus each such vertex belongs to a unique copy of $K^\ell_{t+1}$.  This estimate will be used in the proof of the main theorem.

\begin{lemma}\label{d}
Let $G$ be an extremal $\ell$-graph for Problem \ref{13}. We have
\begin{equation*}
\sum_{v \in V(G)} \left( \text{deg}(v) - \binom{t}{\ell-1} \right) \le \frac{1}{4}(t+1)^2 \binom{t-1}{\ell-2}.
\end{equation*}
\end{lemma}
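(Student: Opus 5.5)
Since $\sum_v \deg(v)=\ell|E(G)|$, the left-hand side of Lemma~\ref{d} equals $\ell|E(G)|-n\binom{t}{\ell-1}$. So it suffices to exhibit one $\ell$-graph $G^*$ on $n$ vertices satisfying the condition of Problem~\ref{13} with
\[
\ell|E(G^*)| \le n\binom{t}{\ell-1}+\tfrac14(t+1)^2\binom{t-1}{\ell-2}.
\]
Extremality then gives $|E(G)|\le|E(G^*)|$, and the lemma follows. Thus the whole proof is a construction plus a count.

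\textbf{Construction.} Write $n=q(t+1)+r$ with $0\le r\le t$. If $r=0$, take $q$ disjoint copies of $K^\ell_{t+1}$; the excess is $0$. Otherwise, take $q-1$ disjoint copies of $K^\ell_{t+1}$, and on the remaining $t+1+r$ vertices place an $\ell$-graph $H$ in which every vertex lies in at least $\binom{t}{k-1}$ copies of $K^\ell_k$. For $H$, take two copies of $K^\ell_{t+1}$ sharing a common set $C$ of $t+1-r$ vertices. Concretely, $V(H)=A\cup C\cup B$ with $|A|=|B|=r$ and $|C|=t+1-r$, and $E(H)=\binom{A\cup C}{\ell}\cup\binom{B\cup C}{\ell}$. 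Every vertex lies in a $K^\ell_{t+1}$, hence in at least $\binom{t}{k-1}$ copies of $K^\ell_k$, so the condition holds. (This needs $q\ge1$, i.e.\ $n\ge t+1$, which is implicit since otherwise no vertex can satisfy the condition.)

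\textbf{Counting.} In $H$, vertices of $A\cup B$ have degree exactly $\binom{t}{\ell-1}$, so their excess is $0$. A vertex $c\in C$ has degree
\[
\binom{t}{\ell-1}+\Bigl[\binom{t+r}{\ell-1}-\binom{t}{\ell-1}\Bigr]-(\text{overcount}),
\]
since its link is the union of the $(\ell-1)$-sets in $A\cup C\setminus\{c\}$ and in $B\cup C\setminus\{c\}$. So its excess equals the number of $(\ell-1)$-subsets of $A\cup B\cup C\setminus\{c\}$ that meet $B$ and avoid $A$, which is
\[
\binom{t}{\ell-1}-\binom{t-r}{\ell-1}.
\]
Bounding this difference crudely by $r\binom{t-1}{\ell-2}$ (each such set meets $B$; choose a vertex of $B$, then the remaining $\ell-2$ elements from a set of size at most $t-1$ after removing $c$ and that vertex), the total excess is at most
\[
(t+1-r)\,r\binom{t-1}{\ell-2}\le\tfrac14(t+1)^2\binom{t-1}{\ell-2},
\]
by AM--GM. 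This is exactly the bound required.

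\textbf{Main obstacle.} The step needing care is the precise excess count for vertices in $C$: establishing the inequality $\binom{t}{\ell-1}-\binom{t-r}{\ell-1}\le r\binom{t-1}{\ell-2}$ rigorously. The cleanest route is to write the difference as a telescoping sum $\sum_{i=t-r}^{t-1}\binom{i}{\ell-2}$ and bound each term by $\binom{t-1}{\ell-2}$. One must also check the edge cases $r=0$ and $\ell=2$ (where $\binom{t-1}{0}=1$ and the excess is exactly $r$ per vertex of $C$), and note that the overlap construction is valid for all $1\le r\le t$, since $|C|\ge1$.
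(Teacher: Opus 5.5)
Your proposal is correct and is essentially the paper's proof: the same comparison graph ($q-1$ disjoint copies of $K^\ell_{t+1}$ plus two copies sharing $t+1-r$ vertices), the same excess $(t+1-r)\bigl[\binom{t}{\ell-1}-\binom{t-r}{\ell-1}\bigr]$ concentrated on the shared vertices, and the same telescoping bound by $r\binom{t-1}{\ell-2}$ followed by maximizing $r(t+1-r)$. Your intermediate degree display with the ``overcount'' is muddled, but the count you actually use is correct: it is the number of $(\ell-1)$-subsets of $(B\cup C)\setminus\{c\}$ that meet $B$.
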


\begin{proof}
Let $n = q(t+1) + r$ with $0 \le r < t+1$ and let $G'$ be a specific $\ell$-graph consisting of $q-1$ disjoint copies of $K_{t+1}^\ell$ and a remaining component formed by two copies of $K_{t+1}^\ell$ sharing $t+1-r$ vertices. Then $G'$ satisfies the condition of Problem \ref{13}. Since $G$ is an extremal $\ell$-graph that minimizes the number of edges for Problem \ref{13}, we must have $|E(G)| \le |E(G')|$. 

First, we determine  $|E(G')|$. By considering the edges within each component, we have:
\begin{equation*}
\ell |E(G')| = n \binom{t}{\ell-1} + (t+1-r) \left[ \binom{t}{\ell-1} - \binom{t-r}{\ell-1} \right].
\end{equation*}
By the handshaking lemma for $\ell$-graphs, $\sum_{v \in V(G)} \text{deg}(v) = \ell |E(G)|$. Therefore, the total ``excess'' degree sum of $G$ can be bounded as follows:
\begin{align*}
\sum_{v \in V(G)} \left( \text{deg}(v) - \binom{t}{\ell-1} \right) &= \ell |E(G)| - n \binom{t}{\ell-1} \\
&\le \ell |E(G')| - n \binom{t}{\ell-1} \\
&= (t+1-r) \left[ \binom{t}{\ell-1} - \binom{t-r}{\ell-1} \right].
\end{align*}
Using the combinatorial identity $\binom{N}{k} - \binom{N-r}{k} = \sum_{i=1}^r \binom{N-i}{k-1}$, we can bound the term in the brackets:
\begin{align*}
(t+1-r) \sum_{i=1}^r \binom{t-i}{\ell-2} &\le r(t+1-r) \binom{t-1}{\ell-2} \le \frac{1}{4}(t+1)^2 \binom{t-1}{\ell-2},
\end{align*}
where the last inequality follows from the fact that the quadratic $r(t+1-r)$ is maximized at $r = \frac{t+1}{2}$.
\end{proof}

We are now ready to state our main result. It gives an existence version of the isolated-clique phenomenon in the general setting $k>\ell\ge2$. In the special case $k=3$ and $\ell=2$, the threshold is of order $O(t^2)$.

\begin{theorem}\label{1}
If $n > \frac{1}{4}(t+1)^2 \binom{t-1}{\ell - 2} + 3t+1$ and $t\geq k-1$ is an integer, then there exists an extremal hypergraph for Problem~\ref{13} that contains an isolated copy of $K^\ell_{t+1}$.
\end{theorem}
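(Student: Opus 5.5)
The plan is to start from an arbitrary extremal $\ell$-graph $G$ on $n$ vertices and modify it, without increasing $|E(G)|$ and while keeping the condition that every vertex lies in at least $\binom{t}{k-1}$ copies of $K^\ell_k$, until an isolated copy of $K^\ell_{t+1}$ appears. Call a vertex \emph{tight} if $\deg(v)=\binom{t}{\ell-1}$, and \emph{excess} otherwise. By Lemma~\ref{d}, at most $\frac14(t+1)^2\binom{t-1}{\ell-2}$ vertices are excess. Hence, under the hypothesis on $n$, at least $3t+2$ vertices are tight.

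For a tight vertex $v$, the link $L_G(v)$ is an $(\ell-1)$-graph with exactly $\binom{t}{\ell-1}$ edges. It contains at least $\binom{t}{k-1}$ copies of $K^{\ell-1}_{k-1}$, so the $(\ell-1)$-shadow of its $(k-1)$-clique family has size at least $\binom{t}{\ell-1}$. Applying the equality case of Theorem~2(ii) to that clique family shows that the cliques are exactly all $(k-1)$-subsets of some $t$-set $S_v$, and that the link is $\binom{S_v}{\ell-1}$. So $C_v=S_v\cup\{v\}$ is a clique of size $t+1$, and every edge through $v$ lies inside $C_v$. Thus each tight vertex lies in a unique $K^\ell_{t+1}$, and all its neighbours are in $C_v$.

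If some $C_v$ consists entirely of tight vertices, then for every $u\in C_v$ we have $C_u=C_v$. No edge leaves $C_v$, so $C_v$ is an isolated $K^\ell_{t+1}$ and we are done. Otherwise every clique $C_v$ of a tight vertex contains an excess vertex. The main step is a transformation that produces a new extremal graph in this case. I would proceed as follows. Pick tight vertices so that we obtain a collection of cliques $C_{v_1},C_{v_2},\dots$. Distinct such cliques intersect only in excess vertices, since the tight vertices determine their clique. Among the $\ge 3t+2$ tight vertices there are enough to find either three distinct cliques, or one clique whose structure we can exploit.

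Now apply shifting in the spirit of the abstract. Relabel so that excess vertices and the vertices of a few of these cliques are ordered appropriately. Repeatedly apply the shift $S_{ij}$, which replaces $j$ by $i$ in edges where possible. Shifting does not increase the number of edges, and for vertex-shifts the number of $K^\ell_k$'s containing each vertex needs checking. Where shifting would break the degree condition, use antilexicographic compression of links instead: by Theorem~2(i), replacing a vertex's link by the initial antilex segment keeps the clique count and does not increase the shadow. The goal is to move all edges touching tight vertices of one clique $C$ away from the excess vertices of $C$. The excess vertices have degree at least $\binom{t}{\ell-1}+1$, so they are guaranteed, or can be arranged, to retain enough $K^\ell_k$'s elsewhere. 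Concretely, I would replace an excess vertex $x\in C$ by a fresh tight vertex taken from another clique's surplus, exchanging roles. The count $3t+1$ comes from needing about three cliques' worth of tight vertices, each of size $\le t+1$, to perform the swap: one clique to isolate and two to absorb the displaced excess vertices.

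The main obstacle is verifying that after the transformation every vertex still lies in at least $\binom{t}{k-1}$ copies of $K^\ell_k$, especially the excess vertices that lose edges to $C$, while the edge count does not increase. If this fails, the fallback is to take, among all extremal graphs, one minimizing $\sum_v |C_v\cap\{\text{excess}\}|$ and derive a contradiction by a local exchange argument.
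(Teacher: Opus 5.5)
\paragraph{Where the proposal stops being a proof.} Your preliminary reduction is correct and agrees with the paper. By Lemma~\ref{d} at least $3t+2$ vertices are tight. The equality case of Theorem~2(ii) puts each tight vertex $v$ in a unique $(t+1)$-clique $C_v$ that contains all its neighbours. A clique made only of tight vertices is therefore isolated.

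Everything after ``Otherwise'' is a plan rather than a proof, and it is exactly where the whole difficulty lies. You never define the transformation. You never verify that it does not increase the number of edges. You never verify the clique-degree condition for the outside vertices that lose their edges into $C$; you call this ``the main obstacle'' yourself. The fallback (minimize excess vertices in the $C_v$'s and do ``a local exchange'') is equally unspecified.

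The tools you cite also do not do what you need:
\begin{itemize}
\item Replacing one vertex's link by an antilexicographic initial segment does not preserve its number of $K^\ell_k$'s. A copy through $v$ also needs its $(k-1)$-set to be a clique of $G-v$, and edges through $v$ lie in other vertices' links too.
\item $S_{i,j}$ may destroy copies through $j$; Lemma~\ref{f} only protects $u\neq j$.
\item A degree above $\binom{t}{\ell-1}$ does not guarantee spare copies of $K^\ell_k$ ``elsewhere''.
\item A tight vertex cannot be ``taken from another clique's surplus'' without destroying the only $K^\ell_{t+1}$ that carries its clique-degree.
\end{itemize}

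\paragraph{The missing idea: the operation $\mathcal{G}(G,A_1,A_2)$.} The paper applies this to two $(t+1)$-cliques as follows.
\begin{itemize}
\item Delete the edges inside $A_1\cup A_2$ that meet both $A_1\setminus A_2$ and $A_2\setminus A_1$.
\item For every $Y\subseteq B$ with $0<|Y|<\ell$, replace $L^Y_G(A_1\cup A_2)$ by the initial antilex segment of the same size on $A_1\cup C$. Here $C$ consists of $t+1$ \emph{new} vertices listed after $A_1$.
\item Each compressed link is an initial segment, and $A_1\cup C$ itself carries only the clique on $A_1$. So the cliques of every such link are initial segments (Claim~\ref{o}), and their intersection is the smallest of them. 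A dual form of Kruskal--Katona then gives $k'_u\ge k_u$ for each $u\in B$ not already in a $K^\ell_{t+1}$ (Claim~\ref{lem19}).
\item $C$ is then shifted into $V\setminus(A_2\setminus A_1)$. This is harmless by Lemma~\ref{f} precisely because only auxiliary vertices play the role of $j$.
\item Lemma~\ref{d} together with $n>\frac14(t+1)^2\binom{t-1}{\ell-2}+2t+1$ shows that no $K^\ell_k$ survives on $C$. Otherwise $k-1$ vertices would each have degree at least $(n-t-k)\binom{k-2}{\ell-2}$.
\end{itemize}
When $A_1\cap A_2=\emptyset$, this isolates $A_2$ without adding edges.

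\paragraph{Getting two disjoint cliques.} You also miss how to obtain two \emph{disjoint} cliques among the (at least three) $C_v$'s. The observation that distinct $C_v$'s meet only in excess vertices does not exclude pairwise intersections. The paper handles this with Lemma~\ref{lem23}, which rules out two distinct intersecting pairs:
\begin{itemize}
\item apply $\mathcal{G}$ twice;
\item delete $A_2\setminus A_1$ and $A'_4\setminus A'_3$;
\item rebuild one or two $(t+1)$-cliques on the freed vertices, and compare edge counts with a Pascal/Vandermonde count.
\end{itemize}
Equality, possible only for $\ell\ge 3$, directly yields an extremal graph with an isolated clique.

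The threshold $3t+1$ rather than $2t+1$ guarantees three cliques. It also lets $\mathcal{G}$ be applied again after up to $t$ vertices are removed. It does not come from the ``one clique to isolate, two to absorb'' accounting you suggest.
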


 Since Problems~\ref{dk} and~\ref{13} are equivalent, Theorem~\ref{1} is equivalent to the
  following result stated in terms of $k$-uniform hypergraphs.

  \begin{theorem}\label{thm:main-prime}
  If $n > \frac{1}{4}(t+1)^2\binom{t-1}{\ell-2} + 3t+1$ and $t \geq k-1$ is an integer,
  then there exists an extremal $k$-uniform hypergraph for Problem~\ref{dk} that contains an
  isolated copy of $K^k_{t+1}$.
\end{theorem}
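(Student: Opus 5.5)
Since Theorem~\ref{thm:main-prime} is just a restatement of Theorem~\ref{1}, I will prove Theorem~\ref{1} in the $\ell$-graph language of Problem~\ref{13}. The plan is to start from an arbitrary extremal $\ell$-graph $G$ and, without increasing $|E(G)|$ and without destroying the condition that every vertex lies in at least $\binom{t}{k-1}$ copies of $K_k^\ell$, transform it into an extremal $\ell$-graph with an isolated $K^\ell_{t+1}$.

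\textbf{Step 1: locating an almost isolated clique.} By Lemma~\ref{d}, the number of vertices with $\deg(v)>\binom{t}{\ell-1}$ is at most $\frac14(t+1)^2\binom{t-1}{\ell-2}$. Since $n$ exceeds this by more than $3t+1$, at least $3t+2$ vertices have degree exactly $\binom{t}{\ell-1}$. Take such a \emph{tight} vertex $v$. Its link $L_G(v)$ has exactly $\binom{t}{\ell-1}$ edges and at least $\binom{t}{k-1}$ copies of $K^{\ell-1}_{k-1}$. By the equality case of the Lov\'asz version (Theorem~2(ii)), applied to the $(k-1)$-family of cliques and its $(\ell-1)$-shadow, $L_G(v)$ contains $\binom{S}{\ell-1}$ for some $t$-set $S$. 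Degree exactness then forces $L_G(v)=\binom{S}{\ell-1}$. Hence $N(v)=S$, and I expect $C_v=S\cup\{v\}$ to be a clique in $G$. This needs a small extra argument: the copies of $K_k^\ell$ through $v$ lie inside $C_v$, and one applies the same reasoning to tight vertices of $S$. Any two tight vertices then either share their clique $C_v$ or their cliques are handled separately.

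\textbf{Step 2: choosing a clique to isolate.} Among the tight vertices I will count how many distinct cliques $C_v$ arise and how many vertices of each are tight. A clique whose $t+1$ vertices are all tight is already isolated, since no vertex has edges leaving it. So suppose no clique is fully tight. Each clique $C$ then contains a non-tight vertex, and that vertex carries excess degree from edges leaving $C$. The $3t+1$ slack should guarantee a clique $C$ such that the non-tight vertices of $C$ and their outside neighbourhood $N_G(C)$ form a configuration small enough for a swap. I will make this precise by counting excess degree against the bound of Lemma~\ref{d}, where each clique with boundary edges contributes excess at least $\binom{t-1}{\ell-2}$ per crossing vertex.

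\textbf{Step 3: the transformation, which I expect to be the main obstacle.} Fix such a clique $C$ and consider $H=G-C$ together with the boundary structure. I want to replace $G$ by $K^\ell_{t+1}\sqcup H'$, where $H'$ lives on the remaining vertices, satisfies the degree condition, and has at most $|E(G)|-\binom{t+1}{\ell}$ edges. To produce $H'$, I will shift the boundary edges of $C$ into vertices outside $C$, in the style of Frankl's shifting. Each shift $S_{x\to y}$ preserves edge counts and, if chosen toward vertices whose links are compressed in antilexicographic order, does not decrease the clique counts containing each vertex. Concretely, I will compress each link to an initial segment of the antilexicographic order using Theorem~2(i). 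This makes the $K^{\ell-1}_{k-1}$-count of each link as large as possible for its size, and it keeps vertices of $H$ above the threshold $\binom{t}{k-1}$.

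The delicate part is verifying that vertices which lost neighbours in $C$ still lie in enough copies of $K_k^\ell$ after the reassignment. The key inequality I would aim for is that the edges a boundary vertex had into $C$ can be re-embedded among at most $3t+1$ spare tight vertices. This is where the extra $3t+1$ in the threshold should be consumed. After all shifts, $C$ carries only its own $\binom{t+1}{\ell}$ edges. Since the edge count is unchanged, the new graph is still extremal and contains an isolated $K^\ell_{t+1}$.
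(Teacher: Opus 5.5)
Your Step 1 is fine: every $(k-1)$-subset of $S$ must span a $K^\ell_{k-1}$, and $t\ge k-1\ge\ell$, so $S\cup\{v\}$ is a clique. Steps 2 and 3, however, contain the entire difficulty and are left as expectations, and the mechanism you sketch in Step 3 fails.

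Shifting a vertex $x\in C$ to some $y\notin C$ acts on every edge through $x$, including edges inside $C$. In general this breaks the very clique you want to isolate, and Lemma~\ref{f} protects every vertex except the one shifted away.

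Compressing links one vertex at a time does not help either. The number of $K^\ell_k$ through $u$ is not the number of $K^{\ell-1}_{k-1}$ in $L_G(u)$, because the $(k-1)$-set must also span a $K^\ell_{k-1}$ in $G$. So Kruskal--Katona applied to a single link controls nothing. Re-embedded links produce copies of $K^\ell_k$ only if they sit on a set that is already a clique.

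The missing idea is the paper's two-clique transformation $\mathcal G(G,A_1,A_2)$:
\begin{itemize}
\item A \emph{second} copy $A_1$ of $K^\ell_{t+1}$ serves as a receiver.
\item For all nonempty $Y\subseteq B$ with $|Y|<\ell$ simultaneously, the link $L^Y_G(A_1\cup A_2)$ is replaced by an antilexicographic initial segment of the same size on $A_1\cup C$. Here $A_1$ comes first in the order and $C$ is a set of $t+1$ new auxiliary vertices.
\item The resulting clique families are nested initial segments, so their intersection is the smallest one. Together with Kruskal--Katona this gives $k'_u\ge k_u$ (Claims~\ref{o} and~\ref{lem19}); if the minimum is attained at $Y=\emptyset$, then $u$ already lies in a $K^\ell_{t+1}$.
\item Only the auxiliary vertices are then shifted into $V\setminus(A_2\setminus A_1)$. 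Lemma~\ref{d} together with $n>\frac14(t+1)^2\binom{t-1}{\ell-2}+2t+1$ shows that afterwards no $K^\ell_k$ meets $C$ (Claim~\ref{in}), so $C$ can be deleted.
\end{itemize}

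Second, Step 2 never confronts intersecting cliques, which is the other essential obstacle; it is hidden in your phrase ``handled separately''. The cliques $C_v$ of different tight vertices may share non-tight vertices. Isolating a clique that meets another one strips the shared vertices of edges the other clique needs. Accordingly, $\mathcal G(G,A_1,A_2)$ isolates $A_2$ only when $A_1\cap A_2=\emptyset$.

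The paper handles this with Lemma~\ref{lem23}. Suppose two distinct pairs of cliques intersect. Applying $\mathcal G$ twice and rebuilding the freed vertices into one or two new cliques gives, via Vandermonde's and Pascal's identities, either fewer edges or an extremal graph with an isolated clique. The latter occurs only in the equality case, which requires $\ell\ge3$.

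This is also where $3t+1$ is really used, not in re-embedding. First, $|V_1|\ge 3t+2>2(t+1)$ gives three distinct cliques, hence two disjoint ones. Second, deleting $A_2\setminus A_1$ (at most $t$ vertices) preserves the $2t+1$ threshold needed to apply $\mathcal G$ a second time. Your excess count ``per crossing vertex'' and the ``re-embedding among at most $3t+1$ spare tight vertices'' do not substitute for either ingredient.
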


\section{A useful construction}


We now introduce a graph transformation, denoted by $\mathcal{G}$, which regularizes the structure of an extremal \(\ell\)-graph while preserving edge minimality and the minimum clique-degree condition.  Let $G$ be an extremal $\ell$-graph for Problem \ref{13} with $|V(G)|>\frac{1}{4}(t+1)^2 \binom{t-1}{\ell-2} + 2t+1$, and let $A_1, A_2 \in \binom{V(G)}{t+1}$ be two vertex sets that both induce a $K_{t+1}^\ell$ in $G$. Setting $A_1 = \{v_1, v_2, \dots, v_{t+1}\}$, we aim to construct a new $\ell$-graph $G'' = \mathcal{G}(G, A_1, A_2)$ with the following properties:

\begin{enumerate}
    \item[(1)] \textbf{Vertex Set Preservation:} $V(G'') = V(G)$.
    \item[(2)] \textbf{Edge Non-increase:} $|E(G'')| \le |E(G)|$.
    \item[(3)] \textbf{Edge Confinement:} Both $A_1$ and $A_2$ induce a $K_{t+1}^\ell$ in $G''$. Furthermore, every edge $e \in E(G'')$ that intersects $A_2 \setminus A_1$ must be a subset of $A_2$ (i.e., if $e \cap (A_2 \setminus A_1) \neq \emptyset$, then $e \subseteq A_2$).
    \item[(4)] \textbf{Clique Shifting:} The construction \textbf{ensures} that for any $A_3 \in \binom{V(G)}{t+1}$ that induces a $K_{t+1}^\ell$ in $G$, its ``shifted'' counterpart 
    \[ A_4 = (A_3 \setminus (A_1 \cup A_2)) \cup \{v_{1}, v_2, \dots, v_{m}\} \]
    also induces a $K_{t+1}^\ell$ in $G''$, where $m = |A_3 \cap (A_1 \cup A_2)|$.
    \item[(5)] \textbf{Maintenance of Clique Condition:} Every vertex in $G''$ remains contained in at least $\binom{t}{k-1}$ copies of $K_k^\ell$.
\end{enumerate}

For convenience, let $V=V(G)$. First, we define an auxiliary $\ell$-graph $G'$ on an expanded vertex set $V \cup C$, designed to satisfy Properties (2)--(5) of $G'' = \mathcal{G}(G,A_1, A_2)$. Subsequently, we apply a sequence of shifting operations to $G'$ to obtain the final graph $G''$ on the original vertex set $V$, thereby satisfying Property (1).
 
Let $A_1$ and $A_2$ both induce a $K_{t+1}^\ell$ in $G$, where $A_1 = \{v_1, \dots, v_{t+1}\}$. 
Let $B$ be the set of vertices in $V \setminus (A_1 \cup A_2)$ having at least one neighbor in $A_1 \cup A_2$. First, we define the auxiliary $\ell$-graph $G'$ as follows:

\begin{enumerate}
    \item[(i)] Let $V(G') = V \cup C$, where $C = \{v_{t+2}, \dots, v_{2(t+1)}\}$, $C \cap V =\emptyset$ and $G'[C]$ is empty.
    \item[(ii)]Induced subgraphs  $G'[A_1]$ and $G'[A_2]$ remain $K_{t+1}^{\ell}$, while all edges $e \subset A_1 \cup A_2$   satisfying both  $e \cap (A_1 \setminus A_2) \neq \emptyset$ and  $e\cap (A_2 \setminus A_1) \neq \emptyset$  are removed.
    \item[(iii)] The induced subgraph on $V \setminus (A_1 \cup A_2)$
 remains unchanged; specifically, $G'[V \setminus (A_1 \cup A_2)] = G[V \setminus (A_1 \cup A_2)]$.
    \item[(iv)]  For any $Y \subset B$ with $0 < |Y| \le \ell - 1$, we have $|L_{G'}^Y(A_1 \cup C)| = |L_G^Y(A_1 \cup A_2)|$ and $L_{G'}^Y(A_2 \setminus A_1) = \emptyset$, ensuring Property (3). Furthermore, the edges in the link graph $L_{G'}^Y(A_1 \cup C)$ are the smallest edges with respect to the antilexicographic order under the labeling $A_1 \cup C = \{v_1, \dots, v_{2(t+1)}\}$.
\end{enumerate}

\begin{remark}
By the construction of $G'$, we have $|E(G')| \le |E(G)|$, satisfying Property (2). One can also verify  that  $G'$ satisfies condition (4). For any $A_3 \in \binom{V}{t+1}$, let $m = |A_3 \cap (A_1 \cup A_2)| > 0$. We show that $A_4 = (A_3 \setminus (A_1 \cup A_2)) \cup \{v_1, \dots, v_m\}$ is a clique in $G'$. Suppose, for the sake of contradiction, that there exists an $\ell$-set $e \subseteq A_4$ such that $e \notin E(G')$. Let $Y = e \cap (A_3 \setminus (A_1 \cup A_2))$, so $|Y| < \ell$. According to property (iv) of the construction, the link graph $L_{G'}^Y(A_1 \cup C)$ consists of the $\binom{m}{\ell-|Y|}$ 
 smallest  $(\ell - |Y|)$-subsets under the antilexicographic order. Since the collection of all $(\ell - |Y|)$-subsets of $\{v_1, \dots, v_m\}$ constitutes an initial segment of this order, $e \setminus Y$ must be an edge in the link graph. This implies $e \in E(G')$, a contradiction. Hence, $A_4$ is a clique in $G'$.
\end{remark}

Next, we prove that 
every vertex in $V$ remains contained in at least $\binom{t}{k-1}$ copies of $K_k^\ell$ in $G'$.
First, we introduce some notation and necessary claims. For any $X \subseteq B$, we define:
\begin{align*}
\mathcal{K}_X &= \{ K \subseteq A_1 \cup A_2 : K \cup X \text{ induces a } K_k^\ell \text{ in } G \}, \\
\mathcal{K}'_X &= \{ K \subseteq A_1 \cup C : K \cup X \text{ induces a } K_k^\ell \text{ in } G' \}.
\end{align*}
For any $Y \subseteq B$ with $0 \le |Y| < \ell$, let:
\begin{align*}
\mathcal{S}_Y &= \{ S \subseteq A_1 \cup A_2 : L_G^Y(S) \text{ is a complete } (\ell - |Y|)\text{-graph} \}, \\
\mathcal{S}'_Y &= \{ S \subseteq A_1 \cup C : L_{G'}^Y(S) \text{ is a complete } (\ell - |Y|)\text{-graph} \}.
\end{align*}

The following claim  provides a useful characterization of the membership in  $\mathcal{K}_X$ and $\mathcal{K}'_X$ in terms of  the associated link graphs $\mathcal{S}_Y$ and  $\mathcal{S}_Y'$, respectively.

\begin{claim} \label{c}
For any $X \subseteq B$ and $T \subseteq A_1 \cup A_2$ with $|T| = k - |X|$, we have $T \in \mathcal{K}_X$ if and only if the following two conditions are satisfied: (i) If $|X| \ge \ell$, then $X$ is a clique in $G$;
(ii) $T \in \mathcal{S}_Y$ for every $Y \subseteq X$ with $\ell + |X| - k \le |Y| < \ell$.
\end{claim}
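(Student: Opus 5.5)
This is essentially an unwinding of definitions: a set of $k$ vertices spans a $K_k^\ell$ exactly when every $\ell$-subset of it is an edge. We therefore classify the $\ell$-subsets of $T\cup X$ by their intersection with $X$. Fix $X\subseteq B$ and $T\subseteq A_1\cup A_2$ with $|T|=k-|X|$. Since $B\cap(A_1\cup A_2)=\emptyset$, the sets $T$ and $X$ are disjoint, so $|T\cup X|=k$. Thus $T\in\mathcal K_X$ if and only if every $e\in\binom{T\cup X}{\ell}$ lies in $E(G)$.

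Write each such $e$ uniquely as $e=Y\cup Z$ with $Y=e\cap X$ and $Z=e\cap T$, so $|Y|+|Z|=\ell$. The constraints $|Z|\le |T|=k-|X|$ and $|Y|\le |X|$ force $\ell+|X|-k\le |Y|\le \min(\ell,|X|)$. We split into two cases.
\begin{enumerate}
\item[(a)] If $|Y|=\ell$, then $e=Y\subseteq X$. This case occurs only when $|X|\ge\ell$, and requiring all such $e$ to be edges says exactly that $X$ is a clique in $G$. This is condition (i).
\item[(b)] If $|Y|<\ell$, then $e\in E(G)$ if and only if $Z\in L_G^Y(T)$, by the definition of the link restricted to $T$. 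As $Z$ ranges over all $(\ell-|Y|)$-subsets of $T$, requiring all these $e$ to be edges says precisely that $L_G^Y(T)$ is the complete $(\ell-|Y|)$-graph on $T$, that is, $T\in\mathcal S_Y$.
\end{enumerate}

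For the converse, assume (i) and (ii) hold and take any $\ell$-subset $e$ of $T\cup X$. Its pair $(Y,Z)$ falls under case (a) or case (b), so $e\in E(G)$, which gives $T\in\mathcal K_X$. For the forward direction, if $T\in\mathcal K_X$, then every $Y\subseteq X$ with $\ell+|X|-k\le|Y|<\ell$ satisfies $T\in\mathcal S_Y$. Indeed, every $(\ell-|Y|)$-subset $Z$ of $T$ gives an $\ell$-set $Y\cup Z\subseteq T\cup X$, which must be an edge. Such a $Z$ exists because $\ell-|Y|\le k-|X|=|T|$. Likewise, $X$ is a clique whenever $|X|\ge\ell$.

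The only point needing care is the boundary bookkeeping. First, the lower bound $|Y|\ge \ell+|X|-k$ must coincide exactly with the feasibility condition $|Z|\le|T|$. Second, when $|Y|$ lies outside this range the condition on $Y$ is not needed, because no $\ell$-subset of $T\cup X$ meets $X$ in $Y$. Third, the degenerate case $Y=\emptyset$ (allowed when $|X|\le k-\ell$) still fits the definition of $\mathcal S_\emptyset$, which says that $T$ is a clique in $G$. The argument for the primed families $\mathcal K'_X$ and $\mathcal S'_Y$ in $G'$, with $A_1\cup C$ in place of $A_1\cup A_2$, is identical. I expect no real obstacle beyond this indexing.
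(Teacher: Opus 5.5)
Your proof is correct and follows the paper's argument essentially verbatim. In both directions you classify each $\ell$-subset $e$ of $T\cup X$ by $Y=e\cap X$: the case $e\subseteq X$ is handled by (i), the case $|Y|<\ell$ by completeness of $L_G^Y(T)$, and the bound $|Y|\ge \ell+|X|-k$ comes from $|e\setminus Y|\le |T|$. Your explicit observation that $T\cap X=\emptyset$ (since $B\cap(A_1\cup A_2)=\emptyset$) makes precise a point the paper leaves implicit.
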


\begin{proof}
($\Leftarrow$) Suppose (i) and (ii) hold. To show $T \in \mathcal{K}_X$, we verify that an arbitrary $\ell$-subset $E \subseteq T \cup X$ is an edge in $G$. If $E \subseteq X$, then $|X| \ge \ell$, and $E \in E(G)$ follows immediately from  (i). Otherwise, let $Y_0 = E \cap X$. Since $E \not\subseteq X$, we have $0 \le |Y_0| < \ell$. Furthermore, as $E \setminus Y_0 \subseteq T$, the size of $Y_0$ satisfies:
\begin{equation*}
    |Y_0| = |E| - |E \setminus Y_0| \ge \ell - |T| = \ell - (k - |X|) = \ell + |X| - k.
\end{equation*}
By (ii), $T \in \mathcal{S}_{Y_0}$, which implies that the link graph $L_G^{Y_0}(T)$ is a complete $(\ell - |Y_0|)$-graph. Since $E \setminus Y_0$ is an $(\ell - |Y_0|)$-subset of $T$, it follows that $E \setminus Y_0 \in L_G^{Y_0}(T)$, and thus $E = Y_0 \cup (E \setminus Y_0)$ is an edge in $G$.

($\Rightarrow$) Conversely, assume $T \in \mathcal{K}_X$, meaning $T \cup X$ induces a $K_k^\ell$ in $G$. Since $X \subseteq T \cup X$, any $\ell$-subset of $X$ must be an edge in $G$, so (i) is satisfied. For any $Y \subseteq X$ with $|Y| < \ell$, consider an arbitrary $(\ell - |Y|)$-subset $S \subseteq T$. The set $S \cup Y$ is an $\ell$-subset of $T \cup X$, and thus $S \cup Y \in E(G)$ by the clique property of $T \cup X$. By the definition of the link graph, this implies $S \in E(L_G^Y(T))$. Since this holds for all such $S \subseteq T$, $L_G^Y(T)$ is a complete $(\ell - |Y|)$-graph, confirming that $T \in \mathcal{S}_Y$ for all valid $Y$, which satisfies (ii).
\end{proof}

An identical argument shows that Claim \ref{c} also holds for $\mathcal{K}_X'$, $\mathcal{S}_Y'$ and $G'$.

\begin{claim} \label{c'}
For any $X \subseteq B$ and $T \subseteq A_1 \cup C$ with $|T| = k - |X|$, we have $T \in \mathcal{K}_X'$ if and only if the following two conditions are satisfied: (i) If $|X| \ge \ell$, then $X$ is a clique in $G'$;
(ii) $T \in \mathcal{S}_Y'$ for every $Y \subseteq X$ with $\ell + |X| - k \le |Y| < \ell$.
\end{claim}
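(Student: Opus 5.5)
The plan is to rerun the proof of Claim~\ref{c} verbatim in the setting of $G'$, $A_1\cup C$, $\mathcal{K}'_X$ and $\mathcal{S}'_Y$. That argument is purely local: it only uses the definitions of clique, link graph and induced $K_k^\ell$. It never uses any property of $G$ such as extremality, or any property of the particular sets $A_1,A_2$. So I will fix $X\subseteq B$ and $T\subseteq A_1\cup C$ with $|T|=k-|X|$, and prove the two directions separately.

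For ($\Leftarrow$), assume (i) and (ii) for $G'$, and take an arbitrary $\ell$-subset $E\subseteq T\cup X$.
\begin{itemize}
\item If $E\subseteq X$, then $|X|\ge \ell$, and $E\in E(G')$ because $X$ is a clique in $G'$ by (i).
\item Otherwise, set $Y_0=E\cap X$. Then $0\le |Y_0|<\ell$, and since $E\setminus Y_0\subseteq T$,
\[
|Y_0|\ge \ell-|T|=\ell+|X|-k .
\]
Hence (ii) gives $T\in\mathcal{S}'_{Y_0}$, so $L^{Y_0}_{G'}(T)$ is complete. In particular $E\setminus Y_0$ is an edge of this link, and therefore $E\in E(G')$.
\end{itemize}
Thus $T\cup X$ is a clique of size $k$ in $G'$, so $T\in\mathcal{K}'_X$. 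Note that $T\cap X=\emptyset$ holds automatically, since $B\subseteq V\setminus(A_1\cup A_2)$ and $C\cap V=\emptyset$.

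For ($\Rightarrow$), assume $T\cup X$ induces $K_k^\ell$ in $G'$.
\begin{itemize}
\item Every $\ell$-subset of $X$ is an $\ell$-subset of $T\cup X$, hence an edge of $G'$. This gives (i).
\item For every $Y\subseteq X$ with $|Y|<\ell$ and every $(\ell-|Y|)$-subset $S\subseteq T$, the set $S\cup Y$ is an $\ell$-subset of $T\cup X$, so $S\cup Y\in E(G')$. Thus $S\in L^{Y}_{G'}(T)$, so $L^{Y}_{G'}(T)$ is complete and $T\in\mathcal{S}'_Y$. This gives (ii).
\end{itemize}

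There is no real obstacle. The only point needing a moment's care is that the link $L^Y_{G'}(T)$ is taken in $G'$ on the enlarged vertex set $V\cup C$, and that $T$ now ranges over subsets of $A_1\cup C$ rather than $A_1\cup A_2$. Both facts are harmless, because the definitions of link and clique make sense in any $\ell$-graph and for any vertex subsets.
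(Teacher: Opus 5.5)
Your proposal is correct and is essentially the paper's approach: the paper proves this claim only by remarking that the proof of Claim~\ref{c} carries over verbatim to $G'$, $\mathcal{K}'_X$ and $\mathcal{S}'_Y$, and you carry out exactly that argument. Your extra observation that $T\cap X=\emptyset$, so that $|T\cup X|=k$, is a small point the paper leaves implicit.
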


\begin{claim} \label{o}
For any $Y \subseteq B$ with $0 \le |Y| < \ell$, and any $S_1, S_2 \subseteq A_1 \cup C$ with $|S_1| = |S_2| = s$ and $S_1 < S_2$, if $S_2 \in \mathcal{S}'_Y$, then $S_1 \in \mathcal{S}'_Y$. Thus we get that the cliques of  size $s$ in $L_{G'}^Y(A_1\cup C)$ are the smallest ones with respect to the antilexicographic order. 
\end{claim}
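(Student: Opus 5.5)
The argument rests on the fact that, by construction (iv), each link $L_{G'}^{Z}(A_1\cup C)$ with $\emptyset\neq Z\subseteq B$, $|Z|\le \ell-1$, is an initial segment of the antilexicographic order on $(\ell-|Z|)$-subsets of $A_1\cup C=\{v_1,\dots,v_{2(t+1)}\}$. For $Z=\emptyset$ the link is just $G'[A_1\cup C]$, which equals the complete $\ell$-graph on $A_1=\{v_1,\dots,v_{t+1}\}$ because $G'[C]$ is empty and no edge meets $C$ without meeting $B$. The complete $\ell$-graph on the first $t+1$ vertices is also an initial segment. So in every case $E_Z:=L_{G'}^{Z}(A_1\cup C)$ is an initial segment $\mathcal{I}$ of the antilex order on $\binom{A_1\cup C}{j}$ with $j=\ell-|Z|\ge1$.

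The property needed is that initial segments are \emph{downward closed under shifting}. Suppose $F\in\mathcal{I}$, $v_b\in F$, $v_a\notin F$ and $a<b$. Then $F'=(F\setminus\{v_b\})\cup\{v_a\}$ satisfies $F'<F$, because $F'\setminus F=\{v_a\}$, $F\setminus F'=\{v_b\}$ and $a<b$. Since $\mathcal{I}$ is an initial segment, $F'\in\mathcal{I}$.

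The core lemma uses only antilex order, not shifting. Let $\mathcal{I}\subseteq\binom{[N]}{j}$ be an initial segment, and let $S_1<S_2$ be $s$-sets with $\binom{S_2}{j}\subseteq\mathcal{I}$. We must show $\binom{S_1}{j}\subseteq\mathcal{I}$. If $s<j$ the condition is vacuous and all $s$-sets lie in $\mathcal{S}'_Y$, so assume $s\ge j$. The proof of this lemma is the main obstacle.

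My plan is to show $\max(\binom{S_1}{j})\le\max(\binom{S_2}{j})$ in antilex order. Then every $j$-subset of $S_1$ is $\le$ some member of $\mathcal{I}$, and hence lies in $\mathcal{I}$. The maximum $j$-subset of a set $S$ in antilex order is its top $j$ elements, which I denote $\operatorname{top}_j(S)$. So the task is to compare $\operatorname{top}_j(S_1)$ with $\operatorname{top}_j(S_2)$.

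Write the elements of $S_1$ and $S_2$ in decreasing order, $x_1>x_2>\cdots>x_s$ and $y_1>y_2>\cdots>y_s$. Antilex comparison of equal-size sets is the same as lexicographic comparison of their decreasing sequences: the first index where they differ decides, with the smaller entry giving the smaller set. Since $S_1<S_2$, there are two cases. Either the first differing index $i$ has $x_i<y_i$, or the two sequences agree entirely, which cannot happen because $S_1\neq S_2$.

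The top $j$ elements are the first $j$ terms of these sequences. If $i\le j$, then $\operatorname{top}_j(S_1)<\operatorname{top}_j(S_2)$. If $i>j$, the two tops are equal. In both cases $\operatorname{top}_j(S_1)\le\operatorname{top}_j(S_2)\in\mathcal{I}$, so $\binom{S_1}{j}\subseteq\mathcal{I}$.

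Now let $Y\subseteq B$ with $0\le|Y|<\ell$ and suppose $S_2\in\mathcal{S}'_Y$. This means $\binom{S_2}{\ell-|Y|}\subseteq L_{G'}^Y(A_1\cup C)$, which is an initial segment by the first paragraph. Applying the lemma with $j=\ell-|Y|$ gives $S_1\in\mathcal{S}'_Y$. The final sentence of the claim then follows directly: the family of $s$-sets in $\mathcal{S}'_Y$ is downward closed in antilex order, so it is an initial segment.

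The one point to verify carefully is the $Y=\emptyset$ case in the first paragraph. Only edges meeting $B$ are redistributed, so $G'[A_1\cup C]=\binom{A_1}{\ell}$, which is indeed an initial segment.
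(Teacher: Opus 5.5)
Your proof is correct and follows the paper's argument. Both reduce to the same chain: every $(\ell-|Y|)$-subset of $S_1$ is antilexicographically at most the top $\ell-|Y|$ elements of $S_1$, which are in turn at most the top $\ell-|Y|$ elements of $S_2$; the initial-segment property (iv), or $G'[A_1\cup C]=\binom{A_1}{\ell}$ when $Y=\emptyset$, then finishes. Your explicit handling of the vacuous case $s<\ell-|Y|$ and your decreasing-sequence justification of the top-segment comparison are slightly more careful than the paper's, while your paragraph on downward closure under shifting is never used and can be dropped.
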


\begin{proof}
If $Y = \emptyset$, the result is immediate from the construction of $G'$, as $A_1 \cup C$ induces a $K_{t+1}^\ell$ on its first $t+1$ vertices together with $t+1$ isolated vertices. Now suppose $Y \neq \emptyset$, and let $r = \ell - |Y|$, 
where $0 < r \leq \ell - 1$. Identifying vertices with their indices, let $S_1 = \{i_1, \dots, i_s\}$ and $S_2 = \{i'_1, \dots, i'_s\}$ be $s$-subsets such that $S_1 < S_2$. Since $S_2 \in \mathcal{S}'_Y$ and  by the definition of $\mathcal{S}'_Y$, we have $|S_2|\ge \ell-|Y|$, hence  $s\ge r$.  For any $r$-subset $W \subseteq S_1$, the definition of the antilexicographic order implies:
\[ W \le \{i_{s-r+1}, \dots, i_s\} \le \{i'_{s-r+1}, \dots, i'_s\}. \]
Since $S_2 \in \mathcal{S}'_Y$, the set $\{i'_{s-r+1}, \dots, i'_s\}$ is an edge in $L_{G'}^Y(A_1 \cup C)$. Property (iv) of the construction of $G'$ ensures that $W$  is an edge in $L_{G'}^Y(A_1 \cup C)$. By the arbitrariness of $W$, $S_1$ is a clique in $L_{G'}^Y(A_1 \cup C)$, hence $S_1 \in \mathcal{S}'_Y$. Thus we get that the cliques of size $s$ in $L_{G'}^Y(A_1\cup C)$ are the smallest ones with respect to the antilexicographic order.\end{proof}

 For any $u \in B$, let $k_u$ denote the number of copies of $K_k^\ell$ in $G$ that contain $u$ and have a nonempty intersection with $A_1 \cup A_2$, and let $k'_u$ denote the number of  copies of $K_k^\ell$ in $G'$ that contain $u$ and have a nonempty intersection with $A_1 \cup C$.

\begin{claim} \label{lem19}
For any $u \in B$, if $u$ is not contained in a $K_{t+1}^\ell$ in $G'$, then $k'_u \ge k_u$.
\end{claim}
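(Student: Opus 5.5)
The plan is to decompose $k_u$ and $k'_u$ according to which vertices of a clique lie outside $A_1\cup A_2$ (resp.\ $A_1\cup C$), and then compare the two sides term by term. Since every copy of $K_k^\ell$ meeting $A_1\cup A_2$ has all its vertices outside $A_1\cup A_2$ adjacent to $A_1\cup A_2$, those vertices lie in $B$. Hence
\[
k_u=\sum_{\substack{X\subseteq B,\ u\in X\\ |X|<k}}|\mathcal K_X|,\qquad k'_u=\sum_{\substack{X\subseteq B,\ u\in X\\ |X|<k}}|\mathcal K'_X|,
\]
where the second identity uses the same observation in $G'$, since by (iv) no vertex of $V\setminus(A_1\cup A_2)$ outside $B$ gets an edge into $A_1\cup C$. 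It therefore suffices to show $|\mathcal K'_X|\ge|\mathcal K_X|$ for each such $X$.

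Fix $X$ and put $s=k-|X|$. By (iii), condition (i) of Claims~\ref{c} and~\ref{c'} is the same in $G$ and $G'$, so we may assume it holds. Claim~\ref{c'} then writes $\mathcal K'_X$ as the intersection, over $Y\subseteq X$ with $\ell+|X|-k\le|Y|<\ell$, of the $s$-sets in $\mathcal S'_Y$. By Claim~\ref{o}, each of these families is an initial segment of the antilexicographic order on $\binom{A_1\cup C}{s}$. An intersection of initial segments is the shortest one, so
\[
|\mathcal K'_X|=\min_Y\bigl|\mathcal S'_Y\cap\tbinom{A_1\cup C}{s}\bigr|.
\]
On the other side, Claim~\ref{c} gives $|\mathcal K_X|\le|\mathcal S_Y\cap\binom{A_1\cup A_2}{s}|$ for every admissible $Y$. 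So the whole comparison reduces to showing, for each admissible $Y$,
\[
\bigl|\mathcal S'_Y\cap\tbinom{A_1\cup C}{s}\bigr|\ge\bigl|\mathcal S_Y\cap\tbinom{A_1\cup A_2}{s}\bigr|.
\]

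For $Y\neq\emptyset$ this is a Kruskal--Katona statement about cliques, with $r=\ell-|Y|$.
\begin{enumerate}
\item The $s$-cliques of $L_G^Y(A_1\cup A_2)$ form a family $\mathcal Q$ whose $r$-shadow lies in that link. Hence $|\partial_r\mathcal Q|\le m:=|L^Y_G(A_1\cup A_2)|$.
\item By (iv), $L^Y_{G'}(A_1\cup C)$ is exactly the first $m$ $r$-sets in antilexicographic order. This is possible because $|A_1\cup C|=2(t+1)\ge|A_1\cup A_2|$.
\item Take the initial segment $\mathcal Q_0$ of $s$-sets with $|\mathcal Q_0|=|\mathcal Q|$. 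By Kruskal--Katona (i), its shadow $\partial_r\mathcal Q_0$ is an initial segment of size at most $|\partial_r\mathcal Q|\le m$. So $\partial_r\mathcal Q_0$ lies inside the link, i.e.\ every member of $\mathcal Q_0$ is an $s$-clique of $L^Y_{G'}(A_1\cup C)$.
\end{enumerate}
This gives the required inequality for every $Y\neq\emptyset$.

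The main obstacle is $Y=\emptyset$, which is admissible exactly when $s\ge\ell$. Here $G'$ has lost all crossing edges between $A_1\setminus A_2$ and $A_2\setminus A_1$, so $\mathcal S'_\emptyset$ may be much smaller than $\mathcal S_\emptyset$. This is where the hypothesis on $u$ enters: I will use it to show the $Y=\emptyset$ constraint is never the binding one in $G'$.
\begin{enumerate}
\item Take $Y=\{u\}$, which is admissible since $|X|\le k-\ell<k-\ell+1$ when $s\ge\ell$.
\item If $L^{\{u\}}_{G'}(A_1\cup C)$ contained all of $\binom{A_1}{\ell-1}$, then $A_1\cup\{u\}$ would be a clique of $G'$, and $u$ would lie in a $K_{t+1}^\ell$. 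So the link of $u$ misses some $(\ell-1)$-subset of $A_1$.
\item Because $\binom{A_1}{\ell-1}$ is an initial segment under the labeling $v_1,\dots,v_{2(t+1)}$, the link of $u$ is contained in $\binom{A_1}{\ell-1}$. Hence all its $s$-cliques are subsets of $A_1$, and these already lie in $\mathcal S'_\emptyset$, because $G'[A_1]$ is complete.
\end{enumerate}
Thus $\mathcal S'_{\{u\}}\cap\binom{A_1\cup C}{s}$ is contained in $\mathcal S'_\emptyset\cap\binom{A_1\cup C}{s}$, so the $Y=\emptyset$ term can be dropped when computing the minimum. Combined with the $Y\ne\emptyset$ comparison,
\[
|\mathcal K'_X|=\min_{Y\neq\emptyset}\bigl|\mathcal S'_Y\cap\tbinom{A_1\cup C}{s}\bigr|\ge\min_{Y\neq\emptyset}\bigl|\mathcal S_Y\cap\tbinom{A_1\cup A_2}{s}\bigr|\ge|\mathcal K_X|.
\]
Summing over $X$ gives $k'_u\ge k_u$.
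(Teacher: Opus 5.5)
Your proof is correct and follows essentially the same route as the paper: decompose over $X\subseteq B$ containing $u$, use Claim~\ref{o} to turn the intersection defining $|\mathcal K'_X|$ into a minimum, compare the nonempty-$Y$ terms via Kruskal--Katona, and use $Y=\{u\}$ together with the hypothesis on $u$ to neutralize the $Y=\emptyset$ term. The differences are only cosmetic. You run the Kruskal--Katona comparison directly rather than by contradiction. You also show $\mathcal S'_{\{u\}}\cap\binom{A_1\cup C}{s}\subseteq\binom{A_1}{s}$, which makes the $Y=\emptyset$ constraint redundant, whereas the paper shows that $Y=\emptyset$ can never attain the minimum.
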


\begin{proof}
Let $\mathcal{X}_u = \{X \subseteq B : u \in X, X \text{ is a clique in } G \text{ or } |X| < \ell\}$, and let $\mathcal{X}'_u$ be defined analogously for $G'$. From the construction of $G'$, it is straightforward to see that $\mathcal{X}_u = \mathcal{X}'_u$. Then $k_u$ is given by:
\begin{align*}
k_u &= \sum_{X \in \mathcal{X}_u} |\mathcal{K}_X| \\
&= \sum_{X \in \mathcal{X}_u} \left| \bigcap_{\substack{Y \subseteq X \\ \ell+|X|-k \le |Y| < \ell}} \{S \in \mathcal{S}_Y : |S| = k - |X|\} \right| \\
&\le \sum_{X \in \mathcal{X}_u} \min_{\substack{Y \subseteq X \\ \ell+|X|-k \le |Y| < \ell}} \left| \{S \in \mathcal{S}_Y : |S| = k - |X|\} \right|.
\end{align*}
The second equality follows from Claim \ref{c}. Similarly,  applying Claim \ref{c'}, we have:
\begin{align*}
k'_u &= \sum_{X \in \mathcal{X}_u} |\mathcal{K}'_X| \\
&= \sum_{X \in \mathcal{X}_u} \left| \bigcap_{\substack{Y \subseteq X \\ \ell+|X|-k \le |Y| < \ell}} \{S' \in \mathcal{S}'_Y : |S'| = k - |X|\} \right|.
\end{align*}

By Claim \ref{o}, for each $Y \subseteq X$ in the range  $\ell+|X|-k \le |Y| < \ell$, the cliques of  size $k-|X|$ in $L_{G'}^Y(A_1\cup C)$ form an initial segment with respect to the antilexicographic order. Since the intersection of such initial segments coincides with the one of minimum cardinality, it follows that:
\begin{align*}
k'_u 
&= \sum_{X \in \mathcal{X}_u} \min_{\substack{Y \subseteq X \\ \ell+|X|-k \le |Y| < \ell}} \left| \{S' \in \mathcal{S}'_Y : |S'| = k - |X|\} \right|.
\end{align*}

By (iv) of the construction of $G'$ and the Kruskal-Katona theorem (Theorem 2 (i)), for any non-empty $Y \subseteq B$, we have $|\{S' \in \mathcal{S}'_Y : |S'| = k-|X|\}| \ge |\{S \in \mathcal{S}_Y : |S| = k-|X|\}|$. Indeed, let $H_1 = L_G^Y(A_1 \cup A_2)$ and $H_2 = L_{G'}^Y(A_1 \cup C)$ be the respective link graphs.  By construction, we have $|E(H_1)| = |E(H_2)|$. For $i=1,2$, let $L_i$ denote the $(k-|X|)$-graph whose edge set $E(L_i)$ consists of the $(k-|X|)$-cliques in $H_i$. Noting that  $|\{S' \in \mathcal{S}'_Y : |S'|=k-|X|\}|=|E(L_2)|$ and $ |\{S \in \mathcal{S}_Y : |S| = k-|X|\}|=|E(L_1)|$, it suffices to show  that $|E(L_2)| \geq |E(L_1)|$. Suppose, for the sake of contradiction, that $|E(L_1)| > |E(L_2)|$. Let $L'_1$ be the antilexicographic compression of $L_1$ onto the vertex set $A_1 \cup C$. Thus, $|E(L'_1)| = |E(L_1)|$, and the edges of $L'_1$ form an initial segment of $(k-|X|)$-sets under the antilexicographic order. It follows from the Kruskal-Katona theorem (Theorem 2 (i)) that
\[ |E(H_1)| \ge |\partial_{\ell-|Y|} L_1| \ge |\partial_{\ell-|Y|} L'_1|. \]
 Recall that $E(H_2)$ is also an initial segment of $(\ell-|Y|)$-sets in the same order. Since  $|E(L'_1)| > |E(L_2)|$, the shadow $\partial_{\ell-|Y|} L'_1$ must strictly contain $E(H_2)$ as an initial segment. This implies $|\partial_{\ell-|Y|} L'_1| > |E(H_2)|$, leading to the chain of inequalities
\[ |E(H_1)| \ge |\partial_{\ell-|Y|} L'_1| > |E(H_2)|, \]
which contradicts the equality $|E(H_1)| = |E(H_2)|$. Therefore, $|E(L_2)| \ge |E(L_1)|$, as required.

For each $X \in \mathcal{X}_u$, let $Y_X \subseteq X$ be a subset that attains the minimum in the expression for $k'_u$, satisfying:
\begin{equation*}
    |\{S' \in \mathcal{S}'_{Y_X} : |S'| = k-|X|\}| = \min_{\substack{Y \subseteq X \\ \ell+|X|-k \le |Y| < \ell}} |\{S' \in \mathcal{S}'_Y : |S'| = k-|X|\}|.
\end{equation*}
Suppose first that $Y_X \neq \emptyset$ for all $X \in \mathcal{X}_u$. Based on the previously established bound $|\{S' \in \mathcal{S}'_{Y_X} : |S'| = k-|X|\}| \ge |\{S \in \mathcal{S}_{Y_X} : |S| =  k-|X|\}|$, we obtain the following chain of inequalities:

\begin{align*}
k'_u &= \sum_{X \in \mathcal{X}_u} |\{S' \in \mathcal{S}'_{Y_X} : |S'| = k - |X|\}| \\
&\ge \sum_{X \in \mathcal{X}_u} |\{S \in \mathcal{S}_{Y_X} : |S| = k - |X|\}| \\
&\ge \sum_{X \in \mathcal{X}_u} \min_{\substack{Y \subseteq X \\ \ell+|X|-k \le |Y| < \ell}} |\{S \in \mathcal{S}_Y : |S| = k - |X|\}| \ge k_u.
\end{align*}

In the remaining case, suppose $Y_X = \emptyset$ for some $X \in \mathcal{X}_u$. The condition $0 = |Y_X| \ge \ell + |X| - k$ forces $|X| \le k - \ell$. Note that the collection $\{S' \in \mathcal{S}'_{\emptyset} : |S'| = k - |X|\}$ consists of all $(k-|X|)$-subsets of $A_1$, denoted by $\binom{A_1}{k-|X|}$. By Claim \ref{o} and the minimality of $Y_X$, we have the inclusion $\binom{A_1}{k-|X|} \subseteq \{S' \in \mathcal{S}'_Y : |S'| = k - |X|\}$ for all $Y \subseteq X$. In particular, taking $Y = \{u\}$ yields $\binom{A_1}{k-|X|} \subseteq \{S' \in \mathcal{S}'_{\{u\}} : |S'| = k - |X|\}$. According to the definition of $\mathcal{S}'_{\{u\}}$, the set $\{u\} \cup A_1$ must induce a $K_{t+2}^\ell$ in $G'$, which implies that $u$ is contained in a $K_{t+1}^\ell$. This contradicts our initial assumption that $u$ is not contained in any $K_{t+1}^\ell$ in $G'$, completing the proof. 
\end{proof}

With the preceding claims established, we are now ready to verify that the vertex clique-degree condition is preserved in $G'$.

\begin{claim} \label{sh}
For any vertex $u \in V$, $u$ is contained in at least $\binom{t}{k-1}$ copies of $K_k^\ell$ in $G'$.
\end{claim}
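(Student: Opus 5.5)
The plan is to split the vertices of $V$ by their position relative to $A_1\cup A_2$ and, in each case, either exhibit a $(t+1)$-clique containing the vertex or compare clique counts with $G$. A $(t+1)$-clique containing $u$ already suffices, because it yields $\binom{t}{k-1}$ copies of $K_k^\ell$ through $u$. There are four cases: $u\in A_1$, $u\in A_2\setminus A_1$, $u\in B$, and $u\in V\setminus(A_1\cup A_2\cup B)$.

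\textbf{The three easy cases.} By (ii) of the construction, $G'[A_1]$ and $G'[A_2]$ are still copies of $K_{t+1}^\ell$. So every $u\in A_1\cup A_2$ lies in a $K_{t+1}^\ell$ of $G'$, and the claim holds for it. Now take $u\in V\setminus(A_1\cup A_2\cup B)$. Then $u$ has no neighbor in $A_1\cup A_2$, so every copy of $K_k^\ell$ in $G$ containing $u$ lies inside $V\setminus(A_1\cup A_2)$. Indeed, if such a copy met $A_1\cup A_2$ at a vertex $w$, then because $\ell\ge 2$ and the copy is complete, some edge would contain both $u$ and $w$, contradicting $u\notin B$. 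By (iii), $G'[V\setminus(A_1\cup A_2)]=G[V\setminus(A_1\cup A_2)]$, so all of these copies survive in $G'$, and the count for $u$ is at least $\binom{t}{k-1}$.

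\textbf{The case $u\in B$.} If $u$ lies in some $K_{t+1}^\ell$ of $G'$, we are done as before. Otherwise Claim~\ref{lem19} applies and gives $k'_u\ge k_u$. Split the copies of $K_k^\ell$ containing $u$ into two kinds:
\begin{itemize}
\item those contained in $V\setminus(A_1\cup A_2)$ (in $G$), respectively in $V\setminus(A_1\cup A_2)=V(G')\setminus(A_1\cup C)$ (in $G'$);
\item those meeting $A_1\cup A_2$ (in $G$), respectively meeting $A_1\cup C$ (in $G'$).
\end{itemize}
Here $A_1\cup C$ plays the role of $A_1\cup A_2$: it is the relabelled copy of $A_1$ together with the new vertex set $C$. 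By (iii), the first kind is identical in $G$ and $G'$. The second kind numbers $k_u$ in $G$ and $k'_u$ in $G'$. Hence the number of copies through $u$ in $G'$ is at least the number in $G$, which is at least $\binom{t}{k-1}$.

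\textbf{Main obstacle.} The step I expect to need the most care is the bookkeeping that $k_u$ and $k'_u$ account for exactly the copies of the second kind. In $G'$ we need that no copy of $K_k^\ell$ through $u$ meets $A_2\setminus A_1$. This follows from (iv): for every nonempty $Y\subseteq B$ we have $L_{G'}^Y(A_2\setminus A_1)=\emptyset$, and edges inside $A_1\cup A_2$ that cross between $A_1\setminus A_2$ and $A_2\setminus A_1$ were removed by (ii). Consequently any clique containing $u\in B$ that meets $A_1\cup A_2\cup C$ has its intersection inside $A_1\cup C$. The same structural facts also justify $\mathcal{X}_u=\mathcal{X}'_u$, which Claim~\ref{lem19} uses.
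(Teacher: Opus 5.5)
Your proof is correct and is essentially the paper's argument: the same case split over $A_1\cup A_2$, $V\setminus(A_1\cup A_2\cup B)$ and $B$, and the observation that a $K_{t+1}^\ell$ through $u$ already suffices. For $u\in B$ not lying in such a clique, you use Claim~\ref{lem19} together with the preservation, by (iii), of the cliques inside $V\setminus(A_1\cup A_2)$, exactly as the paper does. One small slip is harmless: $V(G')\setminus(A_1\cup C)=V\setminus A_1$ still contains $A_2\setminus A_1$, so it is not equal to $V\setminus(A_1\cup A_2)$. This does not matter, because you only need a lower bound on the number of copies through $u$ in $G'$, and your last paragraph shows that no such clique through $u\in B$ meets $A_2\setminus A_1$ anyway.
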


\begin{proof}
Suppose first that $u \in A_1 \cup A_2$. By the construction of $G'$, both $A_1$ and $A_2$ induce a $K_{t+1}^\ell$ in $G'$. Since $u$ belongs to at least one such clique of size $t+1$, the number of $K_k^\ell$ containing $u$ is at least $\binom{(t+1)-1}{k-1} = \binom{t}{k-1}$.

Next, suppose $u \in V \setminus (A_1 \cup A_2 \cup B)$. Since $G'[V \setminus (A_1 \cup A_2)] = G[V \setminus (A_1 \cup A_2)]$ and no edge containing $u$ intersects $A_1 \cup A_2$ in $G$, the vertex $u$ is contained in at least $\binom{t}{k-1}$ copies of $K_k^\ell$ in $G'$.

Finally, we consider the case where $u \in B$. If $u$ is contained in a clique $K_{t+1}^\ell$ in $G'$, the result follows immediately as in the first case. Assume $u$ is not contained in any $K_{t+1}^\ell$ in $G'$. Recall that  $k_u$ and $k'_u$ denote the number of  copies of $K_k^\ell$ containing $u$ in $G$ and $G'$, respectively, that are incident with $A_1 \cup A_2$ (or $A_1 \cup C$). By Claim \ref{lem19}, we have $k'_u \ge k_u$. Since the cliques containing $u$ that do not intersect $A_1 \cup A_2$ are preserved, the inequality $k'_u \ge k_u$ ensures that $u$ continues to satisfy the clique-degree condition in $G'$.  
\end{proof}

Next, we construct another $\ell$-graph $G''$ from $G'$ by eliminating the auxiliary vertex set $C$. 

\begin{definition} \label{def22}
Let $H_0$ be a hypergraph on the vertex set $[n]$. For any $1 \le i < j \le n$ and each $e \in E(H_0)$, define the shifting operator $S_{i,j}$ as:
\[ 
S_{i,j}(e) = 
\begin{cases} 
e \cup \{i\} \setminus \{j\}, & \text{if } j \in e, \, i \notin e, \text{ and } e \cup \{i\} \setminus \{j\} \notin E(H_0), \\
e, & \text{otherwise.}
\end{cases}
\]
Let $S_{i,j}(H_0)$ denote the hypergraph with vertex set $V(H_0)$ and edge set $\{S_{i,j}(e) : e \in E(H_0)\}$. 
\end{definition}

\noindent \textit{Note.} It is clear that $V(S_{i,j}(H_0)) = V(H_0)$ and $|E(S_{i,j}(H_0))| = |E(H_0)|$ for any $1 \le i < j \le n$.

\begin{lemma} \label{f}
Let $H_0$ be an $\ell$-uniform hypergraph and $k \ge \ell$ be an integer. For $1 \le i < j \le n$ and any $u \in V(H_0)\setminus\{j\}$, the number of copies of $K_k^\ell$ containing $u$ in $S_{i,j}(H_0)$ is at least the number of  copies of $K_k^\ell$ containing $u$ in $H_0$.
\end{lemma}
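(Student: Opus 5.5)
We prove the statement by an explicit injection. Write $H'=S_{i,j}(H_0)$, and for a $k$-set $K$ put $K^{*}=K\cup\{i\}\setminus\{j\}$. Let $\mathcal{C}_u(H)$ be the set of $k$-subsets containing $u$ that induce a $K_k^\ell$ in $H$. We define $\varphi:\mathcal{C}_u(H_0)\to\mathcal{C}_u(H')$ by
\[
\varphi(K)=\begin{cases}K, & \text{if } K \text{ is a clique in } H',\\ K^{*}, & \text{otherwise,}\end{cases}
\]
and show that $\varphi$ is well defined and injective.

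First we record how $S_{i,j}$ changes the edge set. An edge of $H_0$ is lost in $H'$ exactly when it has the form $e$ with $j\in e$, $i\notin e$ and $e^{*}=e\cup\{i\}\setminus\{j\}\notin E(H_0)$; in that case $e^{*}$ is gained. It follows that every edge of $H_0$ avoiding $j$ survives, and every edge of $H_0$ containing $i$ survives.

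\emph{Step 1: the second case only arises when $j\in K$ and $i\notin K$.} Suppose $K\in\mathcal{C}_u(H_0)$ is not a clique in $H'$. Then some edge $e\subseteq K$ was lost, so $j\in e\subseteq K$, $i\notin e$, and $e^{*}\notin E(H_0)$. If $i\in K$, then $e^{*}\subseteq K$ would be an edge of $H_0$, since $K$ is a clique in $H_0$; this is a contradiction. Hence $j\in K$, $i\notin K$, and $|K^{*}|=k$. Since $u\in K$ and $u\neq j$, we also get $u\in K^{*}$. This is exactly where the hypothesis $u\neq j$ is used.

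\emph{Step 2: $K^{*}$ is a clique in $H'$.} Take an $\ell$-set $f\subseteq K^{*}$.
\begin{itemize}
\item If $i\notin f$, then $f\subseteq K\setminus\{j\}$. So $f\in E(H_0)$ and $f$ avoids $j$, hence $f$ survives in $H'$.
\item If $i\in f$, put $g=f\cup\{j\}\setminus\{i\}\subseteq K$, so $g\in E(H_0)$. If $f=g^{*}\in E(H_0)$, then $f$ contains $i$ and survives. Otherwise $S_{i,j}(g)=f$, so $f\in E(H')$.
\end{itemize}
Thus $K^{*}\in\mathcal{C}_u(H')$, and $\varphi$ is well defined.

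\emph{Step 3: injectivity.} Two sets in the second case with equal images $K_1^{*}=K_2^{*}$ coincide, because $K=K^{*}\cup\{j\}\setminus\{i\}$. It remains to rule out $K_1=K_2^{*}$ with $K_1$ in the first case and $K_2$ in the second. The lost edge $e\subseteq K_2$ from Step 1 satisfies $e^{*}\subseteq K_2^{*}$ and $e^{*}\notin E(H_0)$. So $K_2^{*}$ is not a clique in $H_0$, whereas every $K_1$ in the first case is a clique in $H_0$. Hence such a collision is impossible, $\varphi$ is injective, and $|\mathcal{C}_u(H')|\ge|\mathcal{C}_u(H_0)|$.

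The main point requiring care is Step 2: checking that every $\ell$-subset of the shifted set $K^{*}$ is present in $H'$. This relies on the two survival rules for edges avoiding $j$ and edges containing $i$.
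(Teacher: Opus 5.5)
Your proof is correct and uses the same injection as the paper: $\phi(K)=K$ if $K$ stays a clique in $S_{i,j}(H_0)$, and $\phi(K)=K\cup\{i\}\setminus\{j\}$ otherwise. You go further than the paper by checking details it only asserts. These are that the shifted set is a clique in $S_{i,j}(H_0)$, your Step 2, and that images from the two cases cannot collide because the shifted set contains the non-edge $e^{*}$ of $H_0$, your Step 3.
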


\begin{proof}
Let $\mathcal{C}_u(H_0)$ denote the collection of $k$-subsets of $V(H_0)$ that contain $u$ and induce a $K_k^\ell$ in $H_0$. For any $K \in \mathcal{C}_u(H_0)$, let $K^{(i,j)} = (K \setminus \{j\}) \cup \{i\}$. 

By the definition of the shifting operator $S_{i,j}$, if $K$ is a clique in $H_0$ but not in $S_{i,j}(H_0)$, it must be the case that $j \in K$, $i \notin K$, and $K^{(i,j)}$ is not a clique in $H_0$. However, in this case, the shifting operator $S_{i,j}$ ensures that $K^{(i,j)}$ is a clique in $S_{i,j}(H_0)$. Since $u \in K$ and $u \neq j$, it follows that $u \in V(K^{(i,j)})$.

Thus, the map $\phi: \mathcal{C}_u(H_0) \to \mathcal{C}_u(S_{i,j}(H_0))$, defined by $\phi(K) = K^{(i,j)}$ if $K$ is not a clique in $S_{i,j}(H_0)$ and $\phi(K) = K$ otherwise, is an injection. This implies that the number of $K_k^\ell$ containing $u$ in $S_{i,j}(H_0)$ is at least as large as the number of those in $H_0$.
\end{proof}

Claim \ref{sh} implies that every vertex in $V$ is contained in at least $\binom{t}{k-1}$ copies of $K_k^\ell$ in $G'$, and it follows from Lemma \ref{f} that this lower bound on the clique degree is preserved for $G_{n-(t+1)+a}$.

Suppose $|A_1 \cap A_2| = a$. We relabel the vertices in $V(G')$ such that $V(G') \setminus ((A_2 \setminus A_1) \cup C) = [n - (t+1) + a]$, where $A_2 \setminus A_1 = [n] \setminus [n - (t+1) + a]$ and $C = [n + t + 1] \setminus [n]$. Let $G_0 = G'$. For each $1 \le i \le n - (t+1) + a$, we define 
\[ G_i = S_{i, n+1}(S_{i, n+2}(\dots(S_{i, n+t+1}(G_{i-1}))\dots)). \]

From the construction of $G_{n-(t+1)+a}$, we immediately get the following statement:

\begin{claim} \label{cs}
Let $D \subseteq V\cup C$ induce a $K_k^\ell$ in $G_{n-(t+1)+a}$. If $D\cap C\neq \emptyset$, then for any $i\in V\setminus (D\cup(A_2\setminus A_1))$ and $j\in D\cap C$, we have $D \cup \{i\}\setminus \{j\}$ induce a $K_k^\ell$ in $G_{n-(t+1)+a}$.
\end{claim}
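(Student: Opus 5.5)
The plan is to reduce Claim~\ref{cs} to a \emph{stability} property of the edge set of $G_{n-(t+1)+a}$ and then deduce the clique statement directly. Call an $\ell$-graph $H$ \emph{$(p,q)$-stable} if for every $e\in E(H)$ with $q\in e$ and $p\notin e$ we have $(e\setminus\{q\})\cup\{p\}\in E(H)$. I will show that $G_{n-(t+1)+a}$ is $(p,q)$-stable for every $p\in[n-(t+1)+a]=V\setminus(A_2\setminus A_1)$ and every $q\in C$.

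\textbf{Deducing the claim from stability.} Let $D$ induce a $K_k^\ell$ in $G_{n-(t+1)+a}$, and take $j\in D\cap C$ and $i\in V\setminus(D\cup(A_2\setminus A_1))$, so that $i\le n-(t+1)+a$. Put $D'=(D\setminus\{j\})\cup\{i\}$ and let $E$ be an $\ell$-subset of $D'$.
\begin{itemize}
\item If $i\notin E$, then $E\subseteq D$, so $E$ is an edge.
\item If $i\in E$, then $E_0=(E\setminus\{i\})\cup\{j\}\subseteq D$ is an edge containing $j$ and not $i$. By $(i,j)$-stability, $E=(E_0\setminus\{j\})\cup\{i\}$ is an edge.
\end{itemize}
Hence $D'$ is a clique.

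\textbf{Proving stability.} I will prove by induction along the sequence of individual shifts the following invariant. After all shifts $S_{p,q}$ with $p<i$, and after those shifts $S_{i,q}$ already performed in stage $i$, the current graph is $(p,q)$-stable for all pairs $(p,q)$ already processed, meaning $p<i$ with $q\in C$, or $p=i$ with $q$ among the already-used elements of $C$.

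First, a single shift $S_{p,q}$ makes the graph $(p,q)$-stable. This is the standard fact: an edge $e$ with $q\in e$ and $p\notin e$ either moves to $(e\setminus\{q\})\cup\{p\}$, or stays because that set is already an edge.

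The key step, and the expected main obstacle, is showing that a new shift $S_{i,j'}$ preserves $(i',j)$-stability for an already processed pair with $j\ne j'$ (and $i'\le i$, $i'\neq i$ or $j\neq j'$). Let $H$ be the graph before the shift, let $f$ be an edge after the shift with $j\in f$ and $i'\notin f$, and set $g=(f\setminus\{j\})\cup\{i'\}$. I will split into two cases.
\begin{itemize}
\item \emph{$f=(e\setminus\{j'\})\cup\{i\}$ for a shifted edge $e$.} Stability of $H$ gives $g_0=(e\setminus\{j\})\cup\{i'\}\in E(H)$. The set $g_0$ contains $j'$ and not $i$, so after $S_{i,j'}$ either $g_0$ is replaced by $(g_0\setminus\{j'\})\cup\{i\}=g$, or $g$ was already present. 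Either way $g$ is an edge.
\item \emph{$f$ is an unshifted edge of $H$.} Then $g\in E(H)$ by stability. The only danger is that $g$ itself gets shifted away. This requires $j'\in g$ and $i\notin g$, hence $j'\in f$ and $i\notin f$. Since $f$ was not shifted, $(f\setminus\{j'\})\cup\{i\}\in E(H)$, and stability applied to this edge gives $(g\setminus\{j'\})\cup\{i\}\in E(H)$. So $g$ is not moved.
\end{itemize}
Throughout I must track that $j\neq j'$ and $i'\neq i$ where needed, so that these swaps commute. The degenerate coincidences, $i'=i$ within the same stage and $j=j'$ (which is the single-shift case), also need to be checked; I expect this bookkeeping to be the fiddly part.

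Finally, note that the vertices of $A_2\setminus A_1$ are never used as the index $p$. This is why they are excluded in Claim~\ref{cs}.
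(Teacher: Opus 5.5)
Your proof is correct, and it formalizes what the paper only asserts as ``immediate from the construction'' (the paper gives no argument). The one nontrivial point, which you supply, is that a later shift $S_{i,j'}$ preserves an earlier $(i',j)$-stability; this holds precisely because the source indices $[n-(t+1)+a]$ and the target indices $C$ are disjoint. The degenerate cases you flag are already covered by your two cases. If $i'=i$, a newly shifted $f$ would contain $i=i'$, and $g\ni i$ is fixed by $S_{i,j'}$. If $j'=j$, a newly shifted $f$ cannot contain $j$, and $g\not\ni j$ is fixed by $S_{i,j}$.
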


Applying the edge bound on $G$ given by Lemma \ref{d} together with Claim \ref{cs}, we obtain the following property of $G_{n-(t+1)+a}$.

\begin{claim} \label{in}
Assume that \(t\ge k-1\)
and $n>\frac14(t+1)^2\binom{t-1}{\ell-2}+2t+1.
$
For any vertex set $D \subseteq V\cup C$ that induces a $K_k^\ell$ in $G_{n-(t+1)+a}$, we have $D \cap C = \emptyset$.
\end{claim}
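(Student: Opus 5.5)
Suppose, for contradiction, that some $D$ inducing a $K_k^\ell$ in $G_{n-(t+1)+a}$ meets $C$, and fix $j\in D\cap C$. The plan is to use Claim~\ref{cs} repeatedly: for every $i\in V\setminus(D\cup(A_2\setminus A_1))$, the set $D\cup\{i\}\setminus\{j\}$ is again a $K_k^\ell$. In particular every $\ell$-subset of $D\setminus\{j\}$ of size $\ell-1$ together with $i$ is an edge, so every such vertex $i$ has many incident edges. We will show this forces too much excess degree, contradicting Lemma~\ref{d}.

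\textbf{Step 1.} Set $W=V\setminus(D\cup(A_2\setminus A_1))$. Since $|D|=k\le t+1$ and $|A_2\setminus A_1|\le t+1$, we get $|W|\ge n-2t-2$ roughly. That is large, more than $\tfrac14(t+1)^2\binom{t-1}{\ell-2}$ by the hypothesis $n>\tfrac14(t+1)^2\binom{t-1}{\ell-2}+2t+1$. The exact accounting needs $|D\cap V|\le k-1\le t$, since $j\notin V$, together with $|A_2\setminus A_1|\le t+1$. This gives $|W|\ge n-2t-1>\tfrac14(t+1)^2\binom{t-1}{\ell-2}$.

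\textbf{Step 2.} Fix any $(\ell-1)$-subset $P\subseteq D\setminus\{j\}$; it exists because $k-1\ge \ell-1$. Applying Claim~\ref{cs}, $P\cup\{i\}$ is an edge of $G_{n-(t+1)+a}$ for every $i\in W$. Hence if $P\subseteq V$, the vertices of $P$ have degree at least $|W|$ in the final graph. If $P$ meets $C$, we instead shift the other $C$-vertices of $D$ into $W$ as well, iterating Claim~\ref{cs}. This lets us assume $D\cap C=\{j\}$ and $P\subseteq V$.

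\textbf{Step 3.} We transfer this degree to a contradiction with edge counts. Shifting preserves the number of edges, and $|E(G')|\le|E(G)|$. So the final graph restricted to $V$ has at most $|E(G)|$ edges, with at least one edge, $P\cup\{j\}$, lost to $C$. Each vertex of $V$ still lies in at least $\binom{t}{k-1}$ copies of $K_k^\ell$ by Claim~\ref{sh} and Lemma~\ref{f}, so by Kruskal--Katona it has degree at least $\binom{t}{\ell-1}$. Any excess degree therefore costs edges. Concretely, a vertex $p\in P$ has degree at least $|W|+\binom{t}{\ell-1}$, hmm, more carefully at least $\max(|W|,\binom{t}{\ell-1})$. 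Comparing total degree,
\[
\ell|E(G)|\ \ge\ \sum_{v\in V}\deg(v)+\text{(edges meeting $C$)} \ \ge\ n\binom{t}{\ell-1}+\bigl(|W|-\tbinom{t}{\ell-1}\bigr)+1 .
\]
This exceeds $n\binom{t}{\ell-1}+\tfrac14(t+1)^2\binom{t-1}{\ell-2}$, contradicting Lemma~\ref{d} applied to the extremal $G$.

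\textbf{Main obstacle.} The delicate point is the bookkeeping in Step 3. We must justify that vertices of $W$ still have degree at least $\binom{t}{\ell-1}$ while the edges $P\cup\{i\}$ are extra beyond that. Simply counting $\deg(p)\ge|W|$ loses the $\binom{t}{\ell-1}$ baseline, so the bound only works if $|W|-\binom{t}{\ell-1}$ still exceeds the Lemma~\ref{d} budget. The threshold's $+2t+1$ is suspiciously tailored for exactly this. If needed, I would first try summing over all $\ell-1$ vertices of $P$, which multiplies the excess by $\ell-1$ and also counts the edge in $C$. Alternatively I would use the $\binom{k-1}{\ell-1}\ge1$ distinct sets $P$ to absorb the baseline.
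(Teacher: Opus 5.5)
Your strategy is the paper's: use Claim~\ref{cs} to move the $C$-vertices of $D$ into $V$, lower-bound the degrees of the fixed vertices of $D\setminus\{j\}$, and compare the resulting excess with Lemma~\ref{d}. However, there is a genuine gap exactly where you flag it, and that step is the entire quantitative content of the claim. Write $A=\frac14(t+1)^2\binom{t-1}{\ell-2}$ and $\Sigma$ for the total excess of $G$. Your display gives only $\Sigma\ge |W|-\binom{t}{\ell-1}+1$, and Step~1 gives only $|W|>A$. So you get $\Sigma>A-\binom{t}{\ell-1}+1$, which contradicts nothing.

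Your first remedy, summing over the $\ell-1$ vertices of one $P$, does work for $\ell\ge3$, because $(\ell-1)\binom{t}{\ell-1}=t\binom{t-1}{\ell-2}\le(\ell-2)A$. For $\ell=2$, though, it does nothing: $P=\{p\}$, and you are left with $\Sigma\ge n-3t$. That need not exceed $A$ under $n>A+2t+1$; for example $t=3$, $n=12$ gives $\Sigma\ge3<4=A$. Since $\ell=2$ is the original F\"uredi--Zhao case, it cannot be set aside.

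What is missing is a sharper degree count.

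\begin{itemize}
\item After reducing to $D\cap C=\{j\}$, keep $|D\cap V|=k-1$ rather than weakening it to $\le t$. This gives $|W|\ge n-k-t$.
\item A vertex $p\in D\setminus\{j\}$ lies in $\binom{k-2}{\ell-2}$ distinct edges $P\cup\{i\}$ for each $i\in W$, one for each $(\ell-1)$-set $P\ni p$ inside $D\setminus\{j\}$.
\item It also lies in the $\binom{k-2}{\ell-1}$ edges inside $D\setminus\{j\}$.
\end{itemize}

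Together these give $\deg(p)\ge\binom{k-2}{\ell-2}(n-k-t)+\binom{k-2}{\ell-1}$, which is the paper's bound \eqref{eq:deg_bound}. Every vertex of $V$ has degree at least $\binom{t}{\ell-1}$ in the final graph, so all excess terms are nonnegative. You can therefore sum the excess over all $k-1$ vertices of $D\setminus\{j\}$ and discard the rest.

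\begin{itemize}
\item For $\ell=2$, each term is at least $n-2t-2>A-1$. Using $k-1\ge2$ and $A>2$ (as $t\ge2$) gives $\Sigma>2(A-1)>A$.
\item For $\ell\ge3$, use $\binom{k-2}{\ell-2}\ge2$, $n-k-t>A$ (from $t\ge k-1$), and $\binom{t}{\ell-1}<A$. Then each term exceeds $2A-\binom{t}{\ell-1}>A$.
\end{itemize}

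This case split is how the paper closes the argument. Without it, your Step~3 remains a heuristic rather than a proof.
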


\begin{proof}
Suppose, for the sake of contradiction, that $D \cap C \neq \emptyset$. Since $N_{G'}(A_2 \setminus A_1) \subseteq A_1$ and $C \cap A_1=\emptyset$, we have $N_{G'}(C)\cap (A_2 \setminus A_1)=\emptyset$. Furthermore, we have $N_{G_{n-(t+1)+a}}(C)\cap (A_2 \setminus A_1)=\emptyset$. Since $D \cap C \neq \emptyset$ and $D$ induces a $K_k^\ell$ in $G_{n-(t+1)+a}$, we have $D \cap (A_2 \setminus A_1) = \emptyset$. Let $d_0 = |D \cap V|$. According to Claim \ref{cs}, every $k$-subset of the form $\{D \cap V\} \cup F$ induces a $K_k^\ell$  for any $F \in \binom{V \setminus (D \cup (A_2 \setminus A_1))}{k-d_0}$.

Without loss of generality, assume $D \cap V = [d_0]$. By specifically choosing $F = \{d_0+1, \dots, k-1, j\}$ for each $j$ in the range $k \le j \le n - (t+1) + a$, it follows that $[k-1] \cup \{j\}$ induces a $K_k^\ell$ in $G_{n-(t+1)+a}$. Since each such clique provides $\binom{k-2}{\ell-2}$ edges containing both $x$ and $j$ for any fixed $x \in [k-1]$, we obtain the following lower bound on the degree of vertices $x \in [k-1]$:
\begin{equation} \label{eq:deg_bound}
\deg_{G_{n-(t+1)+a}}(x) \ge \binom{k-2}{\ell-1}+\sum_{j=k}^{n-(t+1)+a} \binom{k-2}{\ell-2} \ge (n - t - k) \binom{k-2}{\ell-2}+\binom{k-2}{\ell-1}.
\end{equation}

To reach a contradiction, we evaluate the sum $ \sum_{v \in V} \left( \deg_G(v) - \binom{t}{\ell-1} \right)$. By the Handshaking Lemma and the fact that $|E(G)| \ge |E(G_{n-(t+1)+a})|$, we have
\begin{equation*}
\sum_{v \in V} \left( \deg_G(v) - \binom{t}{\ell-1} \right) \ge \ell |E(G_{n-(t+1)+a})| - n \binom{t}{\ell-1}.
\end{equation*}
Expressing the right-hand side in terms of vertex degrees, and noting that, for every \(v \in V\), since \(v\) is contained in at least \(\binom{t}{k-1}\) copies of \(K_k^\ell\), the Kruskal--Katona theorem implies
\[
\deg_{G_{n-(t+1)+a}}(v) \ge \binom{t}{\ell-1},
\]
we have:
\begin{align*}
\sum_{v \in V} \left( \deg_G(v) - \binom{t}{\ell-1} \right) &\ge \sum_{v \in V} \left( \deg_{G_{n-(t+1)+a}}(v) - \binom{t}{\ell-1} \right) + \sum_{v \in C} \deg_{G_{n-(t+1)+a}}(v) \\
&\ge \sum_{x=1}^{k-1} \left( \deg_{G_{n-(t+1)+a}}(x) - \binom{t}{\ell-1} \right).
\end{align*}
Applying the bound from \eqref{eq:deg_bound} and the fact that $n > \frac{1}{4}(t+1)^2 \binom{t-1}{\ell-2} + 2t+1$, it follows that
\begin{align*}
\sum_{v \in V} \left( \deg_G(v) - \binom{t}{\ell-1} \right) &\ge (k-1) \left[ (n-t-k) \binom{k-2}{\ell-2}+\binom{k-2}{\ell-1} - \binom{t}{\ell-1} \right]\\
&>(k-1) \left[ \left(\frac{1}{4}(t+1)^2 \binom{t-1}{\ell-2} + t-k+1\right) \binom{k-2}{\ell-2}+\binom{k-2}{\ell-1} - \binom{t}{\ell-1} \right].
\end{align*}

Set
\[
\Sigma=\sum_{v\in V}\left(\deg_G(v)-\binom{t}{\ell-1}\right),
\qquad
A=\frac14(t+1)^2\binom{t-1}{\ell-2}.
\] If \(\ell=2\), then the standing assumption gives \(t\ge k-1\ge2\). Hence
\[
\Sigma>(k-1)(A-1)\ge 2(A-1)> A,
\]
contradicting Lemma~\ref{d}. It remains to consider \(\ell\ge3\). Since \(k\ge \ell+1\), we have
\[
\binom{k-2}{\ell-2}\ge2
\quad\text{and}\quad
\binom{k-2}{\ell-1}\ge0.
\]
Moreover, \(t-k\ge-1\). Also, since \(\ell-1\ge2\),
\[
\binom{t}{\ell-1}
=
\frac{t}{\ell-1}\binom{t-1}{\ell-2}
\le
\frac{t}{2}\binom{t-1}{\ell-2}
<
\frac14(t+1)^2\binom{t-1}{\ell-2}
=A.
\]
Using \(t-k\ge-1\), \(\binom{k-2}{\ell-2}\ge2\), and
\(\binom{k-2}{\ell-1}\ge0\), we obtain
\[
(A+t-k)\binom{k-2}{\ell-2}
+\binom{k-2}{\ell-1}
-\binom{t}{\ell-1}
\ge
2(A-1)-\binom{t}{\ell-1}.
\]
Therefore,
\[
\begin{aligned}
\Sigma
&>
(k-1)\left[
(A+t-k+1)\binom{k-2}{\ell-2}
+\binom{k-2}{\ell-1}
-\binom{t}{\ell-1}
\right] \\
&\ge
(k-1)\left(2A-\binom{t}{\ell-1}\right) \\
&>A.
\end{aligned}
\]
again contradicting Lemma~\ref{d}. Thus \(D\cap C=\emptyset\).
\end{proof}

Finally, we introduce the definition of $\mathcal{G}(G, A_1, A_2)$. Let $\mathcal{G}(G, A_1, A_2)$ be the $\ell$-graph induced by $V$ in $G_{n-(t+1)+a}$.

\begin{lemma} \label{def20}
Assume that \(t\ge k-1\). Let \(G\) be an extremal \(\ell\)-graph
for Problem~\ref{13} with
$
|V(G)|>\frac14(t+1)^2\binom{t-1}{\ell-2}+2t+1.
$
Let \(A_1,A_2\in\binom{V(G)}{t+1}\) be two vertex sets that both induce a
\(K^\ell_{t+1}\) in \(G\). Then 
the  $\ell$-graph $\mathcal{G}(G, A_1, A_2)$ satisfies properties (1)--(5).
\end{lemma}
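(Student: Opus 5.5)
We verify the five properties for $G''=\mathcal{G}(G,A_1,A_2)=G_{n-(t+1)+a}[V]$ one at a time. Two facts do most of the work. First, every shift $S_{i,j}$ in the construction has $i\in V\setminus(A_2\setminus A_1)$ and $j\in C$. Second, Claim~\ref{in} says no $K_k^\ell$ of $G_{n-(t+1)+a}$ meets $C$.

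\textbf{Property (1)} holds by definition, since $G''$ is the subgraph induced by $V$.

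\textbf{Property (2).} Shifting preserves the number of edges (the Note after Definition~\ref{def22}). Hence
\[ |E(G'')|\le |E(G_{n-(t+1)+a})|=|E(G')|\le|E(G)|, \]
where the last inequality is the Remark following the construction of $G'$.

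\textbf{Property (3).} In $G'$, every edge meeting $A_2\setminus A_1$ lies inside $A_2$; this follows from (ii) and (iv) of the construction. No shift $S_{i,j}$ ever moves a vertex into or out of $A_2\setminus A_1$, because $i\notin A_2\setminus A_1$ and $j\in C$. Thus an edge that meets $A_2\setminus A_1$ after shifting already met it before, and was unchanged by the shift. So the confinement persists in $G''$. Next, $A_1$ and $A_2$ are cliques in $G'$ and contain no vertex of $C$, so Lemma~\ref{f}-type reasoning applies. More directly: a shift $S_{i,j}$ only removes edges containing $j\in C$, so no edge inside $V$ is ever deleted. Edges inside $V$ are therefore never lost, and $A_1$, $A_2$ remain cliques.

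\textbf{Property (4).} By the Remark, $A_4$ is a clique in $G'$. Since $A_4\subseteq V$, the same observation (edges inside $V$ are never removed by shifts from $C$ into $V$) shows that $A_4$ stays a clique in $G_{n-(t+1)+a}$, and hence in $G''$.

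\textbf{Property (5)} is the main step. By Claim~\ref{sh}, every $u\in V$ lies in at least $\binom{t}{k-1}$ copies of $K_k^\ell$ in $G'$. By Lemma~\ref{f} applied repeatedly (with $u\in V$, so $u\neq j\in C$), this count does not decrease through the sequence of shifts, so it still holds in $G_{n-(t+1)+a}$. Claim~\ref{in}, which requires exactly the hypotheses $t\ge k-1$ and $|V|>\frac14(t+1)^2\binom{t-1}{\ell-2}+2t+1$, shows that none of these cliques meets $C$. Each of them is therefore a clique of $G''=G_{n-(t+1)+a}[V]$, and the bound transfers to $G''$.

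The one point needing care is whether a shift could delete an edge contained in $V$. This cannot happen: $S_{i,j}$ alters only edges containing $j$, and every such $j$ lies in $C$. With that observation, all five properties follow.
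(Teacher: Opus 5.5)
Your proposal is correct and matches the paper's proof: property (5) follows from Claim~\ref{sh}, then repeated use of Lemma~\ref{f} (valid because $u\in V$ and every $j\in C$), then Claim~\ref{in}, exactly as in the paper. For (1)--(4), which the paper only calls straightforward, you supply the natural verification: each shift $S_{i,j}$ changes only edges containing $j\in C$ and replaces $j$ by a vertex outside $A_2\setminus A_1$, so edges inside $V$ (in particular those of $A_1$, $A_2$ and $A_4$) are never lost, and the confinement of edges meeting $A_2\setminus A_1$ inside $A_2$ is preserved.
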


\begin{proof}
    Straightforward verification shows that properties (1)--(4) hold for $\mathcal{G}(G, A_1, A_2)$. Regarding property (5), every vertex in $V$ is contained in at least $\binom{t}{k-1}$ copies of $K_k^\ell$ in $G_{n-(t+1)+a}$. By Claim \ref{in}, all such cliques are contained in $V$, and thus this property is preserved in the induced subgraph $\mathcal{G}(G, A_1, A_2)$. Consequently, $\mathcal{G}(G, A_1, A_2)$ satisfies properties (1)--(5), as required.
\end{proof}

Since $G$ is an extremal hypergraph for Problem \ref{13}, it follows that $|E(G'')| = |E(G')|=|E(G)|$, where $G'' = \mathcal{G}(G, A_1, A_2)$. It follows that $G$ contains no edge $e \subseteq A_1 \cup A_2$ with nonempty intersection with both $A_1 \setminus A_2$ and $A_2 \setminus A_1$; otherwise, removing such an edge in the initial phase of the construction would yield $|E(G')| < |E(G)|$.

Thus, we immediately obtain the following lemma:

\begin{lemma}\label{be} Assume that \(t\ge k-1\).
    Let $G$ be an extremal $\ell$-graph for Problem \ref{13} with $|V(G)|>\frac14(t+1)^2\binom{t-1}{\ell-2}+2t+1.
$ Let $A_1, A_2\in \binom{V(G)}{t+1}$ be two distinct vertex sets that both induce a $K_{t+1}^\ell$ in $G$. Then $G$ contains no edge $e \subseteq A_1 \cup A_2$ with nonempty intersection with both $A_1 \setminus A_2$ and $A_2 \setminus A_1$.
\end{lemma}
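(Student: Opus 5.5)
The plan is to argue by contradiction using the transformation $\mathcal{G}$ of Lemma~\ref{def20}. Suppose $G$ is extremal with $|V(G)|$ above the stated threshold, and suppose some edge $e_0\in E(G)$ satisfies $e_0\subseteq A_1\cup A_2$, $e_0\cap(A_1\setminus A_2)\neq\emptyset$ and $e_0\cap(A_2\setminus A_1)\neq\emptyset$. Since $A_1\neq A_2$ and both have size $t+1$, both differences are nonempty. Form the auxiliary graph $G'$ exactly as in the construction (i)--(iv), then the shifted graph $G_{n-(t+1)+a}$, and finally $G''=\mathcal{G}(G,A_1,A_2)$.

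The key step is to count $|E(G')|$ sharply and show that $|E(G')|\le |E(G)|-1$. I will partition $E(G)$ into three classes: edges inside $V\setminus(A_1\cup A_2)$, edges meeting both $B$ and $A_1\cup A_2$, and edges inside $A_1\cup A_2$.
\begin{itemize}
\item The first class is copied unchanged by (iii).
\item For the second class, I group edges by $Y=e\cap B$ with $0<|Y|\le\ell-1$. By (iv), each link $L_G^Y(A_1\cup A_2)$ is replaced by a link of the same cardinality on $A_1\cup C$, so this class is preserved in size.
\item For the third class, (ii) keeps exactly the edges lying in $A_1$ or in $A_2$ (the two cliques) and discards every crossing edge. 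Any non-crossing edge inside $A_1\cup A_2$ lies in $A_1$ or in $A_2$, and all such edges are already present in $G$ because both sets are cliques. Hence this class loses exactly the crossing edges, including $e_0$.
\end{itemize}
Consequently $|E(G')|\le|E(G)|-1$.

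Next I will transfer the strict inequality to $G''$. The shifting operators preserve the number of edges, so $|E(G_{n-(t+1)+a})|=|E(G')|$. Passing to the subgraph induced on $V$ can only delete edges, so $|E(G'')|\le |E(G')|<|E(G)|$. By Lemma~\ref{def20}, $G''$ has vertex set $V(G)$ (property (1)), and every vertex lies in at least $\binom{t}{k-1}$ copies of $K_k^\ell$ (property (5)). Thus $G''$ is feasible for Problem~\ref{13} on $n$ vertices with strictly fewer edges than $G$, contradicting extremality.

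The main obstacle is making the edge count in the second step rigorous. In particular I must check two things. First, that (iv) really gives a bijective count, so that no edge with $Y=e\cap B$ is double counted or lost; the degenerate case $|Y|=\ell$ means $e\subseteq B$, which falls in the first class. Second, that (ii) removes only crossing edges. The hypothesis of Lemma~\ref{def20} requires $|V(G)|>\frac14(t+1)^2\binom{t-1}{\ell-2}+2t+1$, which matches the hypothesis here, so properties (1) and (5) are available without further work.
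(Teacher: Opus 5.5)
Your proposal is correct and follows essentially the paper's argument: apply $\mathcal{G}(G,A_1,A_2)$ and note that step (ii) deletes every crossing edge while the other edge classes keep their sizes. Hence a crossing edge would give a feasible $G''$ on $V(G)$ with $|E(G'')|<|E(G)|$, contradicting extremality. The paper states this in one line; your three-class edge count simply makes the strict inequality explicit.
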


 \section{Proof of Theorem \ref{1}}

 We first prove a structural lemma concerning how copies of
\(K^\ell_{t+1}\) can intersect in an extremal \(\ell\)-graph.

\begin{lemma}\label{lem23}
Assume that \(t\ge k-1\). Let \(G\) be an extremal
\(\ell\)-graph for Problem \ref{13} with
\[
|V(G)|>\frac14(t+1)^2\binom{t-1}{\ell-2}+3t+1.
\]
Let \(\mathcal A\) be the family of all vertex sets that induce a
\(K^\ell_{t+1}\) in \(G\).  Then one of the following statements must be true:

\begin{enumerate}
    \item[(1)] There is at most one unordered pair \(\{A_1,A_2\}\subseteq\mathcal A\)
such that \(A_1\ne A_2\) and \(A_1\cap A_2\ne\emptyset\).
    \item[(2)] We have \(\ell\ge3\), and there exists an extremal \(\ell\)-graph $G'$ for Problem \ref{13} with $V(G')=V(G)$ that contains an isolated copy of $K_{t+1}^\ell$.
\end{enumerate}
\end{lemma}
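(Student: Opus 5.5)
The plan is to argue by contradiction. Assume statement (1) fails, so $\mathcal A$ contains two distinct unordered pairs $\{A_1,A_2\}\neq\{A_3,A_4\}$ of distinct, intersecting $(t+1)$-cliques. Then show that either the excess $\Sigma=\sum_{v}\bigl(\deg_G(v)-\binom{t}{\ell-1}\bigr)$ exceeds the bound of Lemma~\ref{d}, or (only when $\ell\ge3$) the transformation $\mathcal G$ of Section~2 produces an extremal graph with an isolated $K^\ell_{t+1}$. The hypothesis $|V(G)|>\frac14(t+1)^2\binom{t-1}{\ell-2}+3t+1$ is used in two ways. It lets us invoke Lemma~\ref{def20} and Lemma~\ref{be}, which need only $2t+1$, for every intersecting pair. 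The extra $t$ leaves room for a third clique, of size at most $t+1$, outside $A_1\cup A_2$.

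\textbf{Step 1: excess forced by one intersecting pair.} Let $|A_1\cap A_2|=a$ with $1\le a\le t$. By Lemma~\ref{be} there is no edge of $G$ inside $A_1\cup A_2$ meeting both $A_1\setminus A_2$ and $A_2\setminus A_1$. Nevertheless, every $v\in A_1\cap A_2$ lies in all edges of $\binom{A_1}{\ell}\cup\binom{A_2}{\ell}$ that contain $v$. Hence
\[
\deg_G(v)\ \ge\ 2\binom{t}{\ell-1}-\binom{a-1}{\ell-1},
\]
so the $a$ common vertices contribute at least $a\bigl[\binom{t}{\ell-1}-\binom{a-1}{\ell-1}\bigr]$ to $\Sigma$. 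For $\ell=2$ this is $a(t+1-a)$, which is at least $t$.

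\textbf{Step 2: adding up two pairs when $\ell=2$.} I would split according to whether the two pairs share a clique, say $A_3=A_1$, or are disjoint as pairs. In each case I would identify the vertices lying in two or more cliques and sum the lower bounds from Step~1. Whenever one vertex lies in three cliques, its neighbourhood is even larger, which only increases its contribution. The goal is to show that the total exceeds $\frac14(t+1)^2$, contradicting Lemma~\ref{d}. When the crude counting is not enough, I would first apply Lemma~\ref{def20}: the graph $G''=\mathcal G(G,A_1,A_2)$ is again extremal. By Property~(4), the clique $A_3$ or $A_4$ is shifted onto $\{v_1,\dots,v_m\}\subseteq A_1$, producing a new intersection pattern. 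Applying Lemma~\ref{be} to $G''$ then forbids some of the edges still forced by Property~(4), which gives the contradiction. This is how I expect to exclude the configurations with small $a$, and so to show that for $\ell=2$ alternative (1) must hold.

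\textbf{Step 3: $\ell\ge3$.} Here Lemma~\ref{d} gives only $\binom{t-1}{\ell-2}$ room per unit of $r(t+1-r)$, so counting alone does not settle the matter. Instead I would build an extremal graph with an isolated clique.
\begin{itemize}
\item Take $G''=\mathcal G(G,A_1,A_2)$. By Property~(3), every edge meeting $A_2\setminus A_1$ lies inside $A_2$, and by Property~(2) and extremality, $G''$ is still extremal.
\item If $A_1\cap A_2$ has no edges leaving $A_2$, then $A_2$ is already isolated.
\item Otherwise, I would use the second pair $\{A_3,A_4\}$, transported by Property~(4), and apply $\mathcal G$ a second time. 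The aim is to push all outside edges at $A_1\cap A_2$ into $A_1$. The extra $t$ vertices in the hypothesis guarantee that the cliques and sets involved still fit inside $V$ after this second application.
\end{itemize}
The output is an extremal graph on $V(G)$ with an isolated $K^\ell_{t+1}$, which is alternative (2).

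The main obstacle is Step~2: bounding the excess of two overlapping pairs sharply enough to beat $\frac14(t+1)^2$. This bound is exactly the worst case $r(t+1-r)$ for a single pair, so two pairs with small overlaps $a$ each cost only about $t$. A pure degree count therefore fails. Those configurations have to be handled by the shifting step, Property~(4) combined with Lemma~\ref{be}, and verifying that this shifting argument really closes every subcase is where I expect most of the work.
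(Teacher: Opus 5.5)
Your proposal has a genuine gap. Neither Step~2 nor Step~3 contains a mechanism that actually yields a contradiction or an extremal graph with an isolated clique.

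For $\ell=2$ you rightly note that degree counting fails. The fallback, ``apply $\mathcal G$, then Lemma~\ref{be} forbids edges forced by Property~(4)'', cannot close the argument either. Lemma~\ref{be} only forbids edges meeting both $A_1\setminus A_2$ and $A_2\setminus A_1$. The edges guaranteed by Property~(4) lie inside the shifted clique $A_3'$, so they are never of that kind.

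Concretely, for $\ell=2$ take disjoint copies of $K^\ell_{t+1}$ together with two components, each made of two copies of $K^\ell_{t+1}$ sharing exactly one vertex. This graph:
\begin{itemize}
\item satisfies the clique-degree condition;
\item has excess $2t$, which is below the bound of Lemma~\ref{d} for $t\ge6$;
\item has no crossing edges;
\item is left unchanged by $\mathcal G$ applied to either pair, since $B=\emptyset$ and $C$ carries no edges.
\end{itemize}
Nothing in your Steps~1--2 rules it out, yet for $\ell=2$ the lemma asserts that it is not extremal.

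In Step~3, the aim of isolating $A_2$ by pushing the edges at $A_1\cap A_2$ into $A_1$ cannot be achieved. $A_1$ must remain a clique and it meets $A_2$, so edges of $A_1$ through $A_1\cap A_2$ always leave $A_2$. An isolated clique has to be assembled from other vertices. You do anticipate a second application of $\mathcal G$, but you give no edge count showing that the resulting graph is still extremal.

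The missing idea is a detach-and-reattach exchange.
\begin{enumerate}
\item In $G''=\mathcal G(G,A_1,A_2)$, the $s_1=|A_2\setminus A_1|$ vertices of $A_2\setminus A_1$ see only $A_2$. Deleting them removes exactly $\binom{t+1}{\ell}-\binom{t+1-s_1}{\ell}$ edges.
\item One shows that the remaining graph $H$ is extremal on its own vertex set. Otherwise, re-attaching $A_2\setminus A_1$ to a better graph on $V(H)$ would beat $G$. This is where the extra $t$ in $3t+1$ is really used: $|V(H)|\ge|V(G)|-t$ must still satisfy the hypothesis of Lemma~\ref{def20}. It is not used to make room for a third clique.
\item The second pair, transported by Property~(4), gives two intersecting cliques in $H$: either $A_1,A_3'$ or $A_3',A_4'$.
\item Applying $\mathcal G$ to $H$ detaches $s_2$ further vertices, saving $\binom{t+1}{\ell}-\binom{t+1-s_2}{\ell}$ edges.
\item All $s_1+s_2$ detached vertices are then reinserted.
\begin{itemize}
\item If $s_1+s_2\le t$, they form one clique glued to $t+1-s_1-s_2$ vertices of $A_1$. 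This is strictly cheaper, because $\binom{t+1}{\ell}-\binom{t+1-s_1}{\ell}-\binom{t+1-s_2}{\ell}+\binom{t+1-s_1-s_2}{\ell}=\sum_{i<s_1}\sum_{j<s_2}\binom{t-1-i-j}{\ell-2}>0$.
\item If $s_1+s_2\ge t+1$, they form a clique glued to $d=2(t+1)-s_1-s_2$ vertices of $A_1$, plus an isolated $K^\ell_{t+1}$. This is no more expensive, because $\binom{x}{\ell}+\binom{y}{\ell}\le\binom{x+y}{\ell}$, and it is strictly cheaper when $\ell=2$.
\end{itemize}
\end{enumerate}
This comparison is what produces either the contradiction or alternative~(2), and your plan has nothing that plays its role.
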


\begin{proof}
Suppose, for a contradiction, that neither (1) nor (2) holds. Since
(1) fails, there exist two distinct unordered pairs
\(\{A_1,A_2\}\) and \(\{A_3,A_4\}\) in \(\mathcal A\) such that
\(A_1\cap A_2\ne\emptyset\) and \(A_3\cap A_4\ne\emptyset\).

Let \(A_1=\{v_1,v_2,\ldots,v_{t+1}\}\), and set
\(G''=\mathcal G(G,A_1,A_2)\). By Lemma~\ref{def20}, \(G''\) satisfies the
clique-degree condition and \(|E(G'')|\le |E(G)|\). Since \(G\) is extremal,
we have \(|E(G'')|=|E(G)|\), and hence \(G''\) is also extremal for
Problem~\ref{13}. For $i \in \{3, 4\}$, we define the shifted sets $A'_i$ as follows:
\[ 
A'_3 = (A_3 \setminus (A_1 \cup A_2)) \cup \{v_1, v_2, \dots, v_{|A_3 \cap (A_1 \cup A_2)|}\} 
\]
and
\[ 
A'_4 = (A_4 \setminus (A_1 \cup A_2)) \cup \{v_1, v_2, \dots, v_{|A_4 \cap (A_1 \cup A_2)|}\}. 
\]

Let \(H\) be the \(\ell\)-graph induced by \(V(G'')\setminus (A_2\setminus A_1)\)
in \(G''\). We claim that \(H\) is an extremal \(\ell\)-graph for Problem~\ref{13}
on its vertex set. Suppose not. Then there exists an extremal \(\ell\)-graph
\(H_1\) for Problem~\ref{13} on \(V(H)\) such that \(|E(H_1)|<|E(H)|\). Since
\[
|V(H)|\ge |V(G)|-t>
\frac14(t+1)^2\binom{t-1}{\ell-2}+2t+1>
\frac14(t+1)^2\binom{t-1}{\ell-2},
\]
Lemma~\ref{d} implies that \(H_1\) has a vertex \(u\) with
\[
\deg_{H_1}(u)=\binom{t}{\ell-1}.
\]
By the Lovász version of the Kruskal--Katona theorem (Theorem~2(ii)),
\(u\) is contained in a copy of \(K^\ell_{t+1}\) in \(H_1\). Relabeling
\(H_1\), if necessary, we may assume that this copy has vertex set \(A_1\).
Let \(G_1\) be the \(\ell\)-graph obtained from \(H_1\) by adding the
vertices in \(A_2\setminus A_1\) and all \(\ell\)-edges of the clique on
\(A_2\) that meet \(A_2\setminus A_1\). Then \(G_1\) is admissible for
Problem~\ref{13}, has vertex set \(V(G)\), and satisfies
\[
|E(G_1)|-|E(G'')|=|E(H_1)|-|E(H)|<0.
\]
Since \(|E(G'')|=|E(G)|\), this contradicts the extremality of \(G\).
Thus \(H\) is extremal.

Note that $A'_3, A'_4 \subseteq V(H)$ both induce a $K^\ell_{t+1}$ in $H$. Since all neighbors of $A_2 \setminus A_1$ in $G''$ belong to the $K_{t+1}^\ell$ induced by $A_1$, every vertex in $H$ is contained in at least $\binom{t}{k-1}$ copies of $K_k^\ell$.

We shall show that $H$ contains two intersecting copies of $K_{t+1}^\ell$. Since the pairs $\{A_1, A_2\}$ and $\{A_3, A_4\}$ are distinct, we may assume, without loss of generality, that $A_3 \notin \{A_1, A_2\}$. Observe that $A_3 \setminus (A_1 \cup A_2)$ must be non-empty; otherwise, there is an edge $e\subseteq A_3$ in $G$ that has a nonempty intersection with both $A_1\setminus A_2$ and $A_2\setminus A_1$, a contradiction with Lemma \ref{be}. If $A_3 \cap (A_1 \cup A_2) \neq \emptyset$, then the definitions of $A'_3$ and $A_1$ immediately imply that $A'_3 \cap A_1 \neq \emptyset$ and $A'_3 \neq A_1$. In this case, $A'_3$ and $A_1$ are two such intersecting copies in $H$. Alternatively, if $A_3 \cap (A_1 \cup A_2) = \emptyset$, the fact that $A_3$ and $A_4$ intersect implies that $A'_3$ and $A'_4$ are two intersecting copies of $K_{t+1}^\ell$ in $H$.

Without loss of generality, let \(A'_3\) and \(A'_4\) be two such intersecting
copies of \(K^\ell_{t+1}\) in \(H\). Relabeling \(A'_3\) and \(A'_4\), if
necessary, we may assume that \((A'_4\setminus A'_3)\cap A_1=\emptyset\).
Indeed, both \(A'_3\cap A_1\) and \(A'_4\cap A_1\) are initial segments of
\(A_1\); hence we may choose \(A'_4\) so that
\(A'_4\cap A_1\subseteq A'_3\cap A_1\). Since \(H\) is extremal and
\[
|V(H)|\ge |V(G)|-t>
\frac14(t+1)^2\binom{t-1}{\ell-2}+2t+1,
\]
Lemma~\ref{def20} applies to \(H\) and the two cliques \(A'_3,A'_4\). Set
\[
H'=\mathcal G(H,A'_3,A'_4).
\]
Then every vertex in \(H'\) remains contained in at least
\(\binom{t}{k-1}\) copies of \(K^\ell_k\).

Finally, let $H''$ be the $\ell$-graph induced by $V(H') \setminus (A'_4 \setminus A'_3)$ in $H'$. It follows that every vertex in $H''$ is contained in at least $\binom{t}{k-1}$ copies of $K_k^\ell$.

  Set
$s_1=|A_2\setminus A_1|$, $s_2=|A'_4\setminus A'_3|.$ 
We derive the desired contradiction by distinguishing two cases according to
the value of \(s_1+s_2\).

\medskip
\noindent \textbf{Case 1: $s_1 + s_2 \ge t+1$.}  Let
\[
d=2(t+1)-s_1-s_2 .
\]
Since each of the pairs \(\{A_1,A_2\}\) and \(\{A'_3,A'_4\}\) consists of two
distinct intersecting \((t+1)\)-sets, we have
$1\le s_1,s_2\le t$, and hence $2\le d\le t+1$.

Partition $V$ into two parts
\[
P=V\setminus\big((A_2\setminus A_1)\cup(A'_4\setminus A'_3)\big),
\qquad
Q=(A_2\setminus A_1)\cup(A'_4\setminus A'_3).
\]
Choose a subset $W_0\subseteq A_1$ with $|W_0|=d$, and choose a subset
$W_1\subseteq Q$ with
\[
|W_1|=s_1+s_2-(t+1).
\]
Let
\[
W=W_0\cup W_1,\qquad W'=Q\setminus W_1.
\]
Then
\[
|W|=|W'|=t+1.
\]
Define an $\ell$-graph $G^*$ on $V$ by
\[
E(G^*)=E(H'')\cup \binom{W}{\ell}\cup \binom{W'}{\ell}.
\]
Since $W_0\subseteq A_1$ and $A_1$ is a clique in $H''$, this construction is well-defined. Moreover, every vertex of $G^*$ is contained in at least
$
\binom{t}{k-1}
$
copies of $K_k^\ell$: vertices in $P$ keep this property from $H''$, while every vertex in $Q$ lies in one of the two copies induced by $W$ and $W'$.

Clearly, $|V(G^*)|=|V(G)|$. Recalling that $|E(G)|=|E(G'')|$ and $|E(H')|\le |E(H)|$, we have
\[
\begin{aligned}
|E(G^*)|-|E(G)|
&=|E(G^*)|-|E(G'')|  \\
&\le \big(|E(G^*)|-|E(H'')|\big)
   -\big(|E(G'')|-|E(H)|\big)
   -\big(|E(H')|-|E(H'')|\big)\\
&=
\left(2\binom{t+1}{\ell}-\binom{d}{\ell}\right)
-\left(\binom{t+1}{\ell}-\binom{t+1-s_1}{\ell}\right)
-\left(\binom{t+1}{\ell}-\binom{t+1-s_2}{\ell}\right)\\
&=
\binom{t+1-s_1}{\ell}
+\binom{t+1-s_2}{\ell}
-\binom{d}{\ell}.
\end{aligned}
\]
Let
\[
x=t+1-s_1,\qquad y=t+1-s_2.
\]
Then $x,y>0$ and $d=x+y$. By Vandermonde's identity,
\[
\binom{d}{\ell}
=
\sum_{i=0}^{\ell}\binom{x}{i}\binom{y}{\ell-i}
\ge
\binom{x}{\ell}+\binom{y}{\ell}.
\]
Therefore
\[
|E(G^*)|-|E(G)|\le 0.
\]

If the inequality is strict, then $|E(G^*)|<|E(G)|$, contradicting the extremality of $G$. Hence equality must hold. When $\ell=2$, this is impossible, since
\[
\binom{x+y}{2}-\binom{x}{2}-\binom{y}{2}=xy>0.
\]
Thus equality can occur only when $\ell\ge3$. 
Since $G^*$ satisfies the clique-degree condition and $|E(G^*)|=|E(G)|$, it is an extremal hypergraph for Problem~\ref{13}. Moreover, by construction, $W'$ induces an isolated copy of $K_{t+1}^{\ell}$ in $G^*$. Therefore (2) holds, contradicting our assumption that (2) fails.

\noindent \textbf{Case 2: $s_1 + s_2 < t+1$.} Set
\[
P=V\setminus\big((A_2\setminus A_1)\cup(A'_4\setminus A'_3)\big),
\qquad
Q=(A_2\setminus A_1)\cup(A'_4\setminus A'_3).
\]
Choose a subset \(W_0\subseteq A_1\) with
\[
|W_0|=t+1-s_1-s_2,
\]
and set \(W=W_0\cup Q\). Then \(|W|=t+1\).

We define an \(\ell\)-graph \(G^*\) on \(V\) as follows. Let
\(G^*[P]=H''\), let \(W\) induce a copy of \(K^\ell_{t+1}\) in \(G^*\),
and add no other edge meeting \(Q\). This is well-defined, since
\(W_0\subseteq A_1\subseteq V(H'')\) and \(A_1\) induces a copy of
\(K^\ell_{t+1}\) in \(H''\). By construction, \(G^*\) satisfies the
clique-degree condition and \(V(G^*)=V(G)\).

We now compare the number of edges. Since \(|E(G'')|=|E(G)|\) and
\(|E(H')|\le |E(H)|\), we have
\[
\begin{aligned}
|E(G^*)|-|E(G)|
&= |E(G^*)|-|E(G'')|  \\
&\le
\bigl(|E(G^*)|-|E(H'')|\bigr)
-\bigl(|E(G'')|-|E(H)|\bigr)
-\bigl(|E(H')|-|E(H'')|\bigr)  \\
&=
\left(\binom{t+1}{\ell}-\binom{t+1-s_1-s_2}{\ell}\right)
-\left(\binom{t+1}{\ell}-\binom{t+1-s_1}{\ell}\right)  \\
&\qquad
-\left(\binom{t+1}{\ell}-\binom{t+1-s_2}{\ell}\right)  \\
&=
\binom{t+1-s_1}{\ell}
+\binom{t+1-s_2}{\ell}
-\binom{t+1-s_1-s_2}{\ell}
-\binom{t+1}{\ell}.
\end{aligned}
\]
It remains to show that the last expression is negative. Indeed, by repeated
applications of Pascal's identity,
\[
\begin{aligned}
&\binom{t+1}{\ell}
-\binom{t+1-s_1}{\ell}
-\binom{t+1-s_2}{\ell}
+\binom{t+1-s_1-s_2}{\ell}  \\
&\qquad =
\sum_{i=0}^{s_1-1}\sum_{j=0}^{s_2-1}
\binom{t-1-i-j}{\ell-2}>0.
\end{aligned}
\]
Here all terms are nonnegative, and the term with \(i=j=0\) is positive.
Therefore
\[
|E(G^*)|-|E(G)|<0,
\]
contradicting the extremality of \(G\).

In Case 1 we either contradicted the extremality of \(G\) or obtained
alternative (2), contrary to the assumption that neither (1) nor (2) holds.
Case 2 gives a direct contradiction to the extremality of \(G\). Hence the
lemma follows.
\end{proof}

\medskip
\noindent \textbf{Proof of Theorem \ref{1}.} Suppose to the contrary that $n > \frac{1}{4}(t+1)^2 \binom{t-1}{\ell-2} + 3t+1$ and that no extremal hypergraph for Problem \ref{13} contains an isolated copy of $K_{t+1}^\ell$.  Let \(G\) be an extremal \(\ell\)-graph for Problem~\ref{13} with no isolated
copy of \(K^\ell_{t+1}\). Set
\[
V_1=\left\{v\in V(G):\deg_G(v)=\binom{t}{\ell-1}\right\},
\qquad
V_2=V(G)\setminus V_1.
\] Lemma \ref{d} then yields
\[
|V_2| \leq  \sum_{v \in V_2} \left( \deg_G(v) - \binom{t}{\ell-1} \right) = \sum_{v \in V(G)} \left( \deg_G(v) - \binom{t}{\ell-1} \right) \leq  \frac{1}{4}(t+1)^2 \binom{t-1}{\ell-2}.
\]
Consequently, $|V_1|=n-|V_2|>3t+1$. For each $v\in V_1$, the Lov\'{a}sz version of the Kruskal-Katona theorem (Theorem 2 (ii)) implies that $v$ is contained in a unique copy of $K_{t+1}^{\ell}$ in $G$.  It follows that there exist at least three distinct copies of
\(K^\ell_{t+1}\) in \(G\).  By our contrary assumption, alternative~(2) of
Lemma~\ref{lem23} cannot occur. Hence alternative~(1) of Lemma~\ref{lem23} holds, 
so among all
copies of \(K^\ell_{t+1}\) in \(G\),  there is at most one intersecting pair.
Therefore two of these three copies are vertex-disjoint.
Let $A'_1$ and $A'_2$ be the vertex sets of two such vertex-disjoint copies.

Let $G_1=\mathcal G(G,A'_1,A'_2)$. Since $A'_1\cap A'_2=\emptyset$, Property~(3) of the $\mathcal G$-transformation implies that $A'_2$ induces an isolated copy of $K_{t+1}^{\ell}$ in $G_1$. By Properties~(2) and~(5), the \(\ell\)-graph $G_1$ also satisfies the condition in Problem~\ref{13} and has no more edges than $G$. Since $G$ is extremal, $G_1$ is extremal as well. This contradicts the assumption that no extremal hypergraph contains an isolated copy of $K_{t+1}^{\ell}$.\qed

\section{Conclusion}
 
In Theorem~\ref{1}, we proved that, for every integer \(t\ge k-1\), there exists
an extremal hypergraph for Problem~\ref{13} containing an isolated copy of
\(K^\ell_{t+1}\) whenever $
n>\frac14(t+1)^2\binom{t-1}{\ell-2}+3t+1.$  It is natural to ask whether this structural property holds for \emph{every} extremal hypergraph.
 
\begin{conjecture}\label{conj:all-extremal}
For every fixed pair \(k>\ell\ge2\), there exists a constant
\(C=C(k,\ell)\) such that, for every integer \(t\ge k-1\) and every
\(n>Ct\), every extremal hypergraph for
Problem~\ref{13} contains an isolated copy of \(K^\ell_{t+1}\).
\end{conjecture}
 
In addition, it would also be natural to determine the extremal hypergraphs for Problem~\ref{13} when $n$ is relatively small, for instance when \(t+1\le n\le C(k,\ell)t\). In this regime, the iterative reduction provided by Theorem~\ref{1} is no longer applicable, and new techniques may be required.
 
One may also naturally inquire whether Conjecture~\ref{conj:all-extremal} extends to non-integer $t$. The answer is negative. Specifically, suppose $k = 3$, $\ell = 2$, and $\lceil t \rceil$ is an even integer such that $\lceil t \rceil + 2$ divides $n$. Let $G$ be the disjoint union of several copies of $K_{\lceil t \rceil+2} \setminus M$, where $M$ is a collection of $\frac{\lceil t \rceil+2}{2}$ pairwise disjoint edges. In this construction, $G$ is a $\lceil t \rceil$-regular graph where the neighborhood of each vertex contains exactly $\binom{\lceil t \rceil}{2} - \frac{\lceil t \rceil}{2}$ edges (i.e., each vertex is contained in that many triangles). If $\binom{t}{2} \leq \binom{\lceil t \rceil}{2} - \frac{\lceil t \rceil}{2}$, then $G$ serves as an extremal graph, as any extremal graph for Problem~\ref{13} must have a minimum degree of at least $\lceil t \rceil$. Consequently, $G$ provides a counterexample.  The existence of this specific counterexample, together with other structures of a similar nature, leads us to the following problem.
 
\begin{problem}
 For real \(t\ge k-1\) and fixed \(k>\ell\ge2\), is there a constant
\(C=C(k,\ell)\) such that, whenever \(n\ge Ct\), every extremal hypergraph for
Problem~\ref{13} contains a connected component of size either \(\lceil t\rceil+1\)
or \(\lceil t\rceil+2\)?
\end{problem}

\section*{Acknowledgements}

The authors would like to express their sincere gratitude to Yi Zhao from Georgia State University for introducing this problem to us and for his insightful suggestions.

\end{document}